\newcommand\RedeclareMathOperator{%
  \@ifstar{\def\rmo@s{m}\rmo@redeclare}{\def\rmo@s{o}\rmo@redeclare}%
}
\newcommand\rmo@redeclare[2]{%
  \begingroup \escapechar\m@ne\xdef\@gtempa{{\string#1}}\endgroup
  \expandafter\@ifundefined\@gtempa
     {\@latex@error{\noexpand#1undefined}\@ehc}%
     \relax
  \expandafter\rmo@declmathop\rmo@s{#1}{#2}}
\newcommand\rmo@declmathop[3]{%
  \DeclareRobustCommand{#2}{\qopname\newmcodes@#1{#3}}%
}
\newcommand\reallywidehat[1]{%
\savestack{\tmpbox}{\stretchto{%
  \scaleto{%
    \scalerel*[\widthof{\ensuremath{#1}}]{\kern.1pt\mathchar"0362\kern.1pt}%
    {\rule{0ex}{\textheight}}
  }{\textheight}%
}{2.4ex}}%
\stackon[-6.9pt]{#1}{\tmpbox}%
}
\DeclareSymbolFont{rsfs}{U}{rsfs}{m}{n}
\DeclareSymbolFontAlphabet{\mathscrsfs}{rsfs}
\numberwithin{equation}{section}
\newtheoremstyle{myexample} 
    {\topsep}                    
    {\topsep}                    
    {\rm }                   
    {}                           
    {\bf }                   
    {.}                          
    {.5em}                       
    {}  
\newtheoremstyle{myremark} 
    {\topsep}                    
    {\topsep}                    
    {\rm}                        
    {}                           
    {\bf}                        
    {.}                          
    {.5em}                       
    {}  
\newtheorem{claim}{Claim}[section]
\newtheorem{theorem}[claim]{Theorem}
\newtheorem{proposition}[claim]{Proposition}
\newtheorem{corollary}[claim]{Corollary}
\newtheorem{lemma}[claim]{Lemma}
\theoremstyle{myremark}
\newtheorem{remark}[claim]{Remark}
\theoremstyle{myremark}
\theoremstyle{myexample}
\definecolor{darkgreen}{rgb}{0.0, 0.5, 0.0}
\newcommand{\bea}{\begin{eqnarray}}
\newcommand{\eea}{\end{eqnarray}}
\newcommand{\<}{\langle}
\renewcommand{\>}{\rangle}
\newcommand{\wt}{\widetilde}
\def\eps{{\varepsilon}}
\def\bh{\boldsymbol{h}}
\def\bsigma{{\boldsymbol{\sigma}}}
\def\cG{{\mathcal G}}
\def\cT{{\mathcal T}}
\def\cC{{\mathcal C}}
\def\cX{{\mathcal X}}
\def\cZ{{\mathcal Z}}
\def\bsig{{\boldsymbol \sigma}}
\def\bv{{\boldsymbol{v}}}
\def\Par{{\sf P}}
\def\de{{\rm d}}
\def\<{\langle}
\def\>{\rangle}
\def\Tr{{\sf Tr}}
\def\Poisson{{\rm Poisson}}
\def\cM{{\cal M}}
\def\cN{{\cal N}}
\def\cY{{\cal Y}}
\def\cL{{\cal L}}
\def\P{\mathbb{P}}
\def\cD{{\cal D}}
\def\b0{{\boldsymbol{0}}}
\def\cD{{{\mathcal D}}}
\def\crit{{\rm crit}}
\renewcommand{\b}{\mathbf{b}}
\def\lt{\left}
\def\rt{\right}
\def\eps{\varepsilon}
\def\bbC{{\mathbb{C}}}
\def\bbE{{\mathbb{E}}}
\def\bbN{{\mathbb{N}}}
\def\bbP{{\mathbb{P}}}
\def\bbR{{\mathbb{R}}}
\def\bbS{{\mathbb{S}}}
\def\bbZ{{\mathbb{Z}}}
\def\cN{{\mathcal{N}}}
\def\cQ{{\mathcal{Q}}}
\def\SYK{{\mathrm{SYK}}}
\def\QSK{{\mathrm{QSK}}}
\def\End{{\mathrm{End}}}
\def\bh{{\boldsymbol{h}}}
\DeclareMathOperator*{\E}{\bbE}
\RedeclareMathOperator*{\P}{\bbP}
\newcommand{\diff}[1]{{\mathrm{d}#1}}
\title{Free Energy Subadditivity for Symmetric Random Hamiltonians}
\author{Mark Sellke\thanks{Institute for Advanced Study and Amazon Core AI. Email: \texttt{msellke@gmail.com}}}
\date{}
\begin{document}

\maketitle

\abstract{
We consider a random Hamiltonian $H:\Sigma\to\bbR$ defined on a compact space $\Sigma$ that admits a transitive action by a compact group $\cG$. When the law of $H$ is $\cG$-invariant, we show its expected free energy relative to the unique $\cG$-invariant probability measure on $\Sigma$ obeys a subadditivity property in the law of $H$ itself. The bound is often tight for weak disorder and relates free energies at different temperatures when $H$ is a Gaussian process. Many examples are discussed including branching random walks, several spin glasses, random constraint satisfaction problems, and the random field Ising model. We also provide a generalization to quantum Hamiltonians with applications to the quantum SK and SYK models.
}

\section{A General Subadditivity Result}

A large part of statistical physics and probability theory is concerned with determining the free energy of a given Hamiltonian. In many models for disorded systems, this Hamiltonian is itself a random function. This paper focuses on free energies of random Hamiltonians obeying rich distributional symmetries.

We begin with a few generalities. Let $\Sigma$ be a compact metric space equipped with reference Borel probability measure $\mu$. We consider a (random) continuous Hamiltonian function $H:\Sigma\to\bbR$ with law $\cL$. Define the associated (random) partition function $Z(H)$ and (deterministic) expected free energy $F(\cL)$ by
\begin{align}
\label{eq:Z}
    Z(H)&=\int e^{H(\bsig)} \mu(\de \bsig);
    \\
\label{eq:F}
    F(\cL)&=\bbE[\log Z(H)].
\end{align}
$F(\cL)$ will always be assumed finite.

Our main result requires that $(\Sigma,\mu,H)$ be highly symmetric in a joint sense. Precisely, we require the existence of a continuous and transitive action $\cG\times \Sigma\to\Sigma$ on $\Sigma$ by a compact group $\cG$; such an action is said to make $\Sigma$ a homogeneous space for $\cG$. In particular, each $g\in\cG$ acts on $\Sigma$ by a homeomorphism $\bsig\mapsto g\bsig$.
Since $\Sigma$ and $\cG$ are compact, there exists a unique $\cG$-invariant probability measure on $\Sigma$ under which $\bsig$ and $g\bsig$ have the same law for each $g\in\cG$ (see e.g. \cite[Theorem 6.2]{diestel2014joys}). We require $\mu$ to be this $\cG$-invariant probability measure.

We say that the law $\cL$ of $H$ is $\cG$-invariant if for each $g\in\cG$ the transformed Hamiltonian given by
\[
    H^g(\bsig)=H(g\bsig),\quad \forall\bsig\in \Sigma
\]
also has law $\cL$. Further, given two Hamiltonian distributions $\cL_1$ and $\cL_2$, let $\cL_1+\cL_2$ be the law of the independent sum $H_1+H_2$ for $(H_1,H_2)\sim \cL_1\times \cL_2$. We can now state our main subadditivity result.

\begin{theorem}
\label{thm:main}
Suppose $(\Sigma,\mu,\cG)$ are as above, and in particular that $\mu$ is the unique probability measure on $\Sigma$ invariant under the transitive action of $\cG$. Let $\cL_1,\cL_2$ be two laws for random Hamiltonians on $\Sigma$ and suppose that $\cL_1$ is $\cG$-invariant. Then
\[
    F(\cL_1+\cL_2)\leq F(\cL_1)+F(\cL_2).
\]
\end{theorem}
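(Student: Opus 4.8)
The plan is to exploit the $\cG$-invariance of $\cL_1$ through a randomized group-shift (a ``symmetrization'' trick) that, on average, decouples the two Hamiltonians; after this the inequality reduces to a single application of Jensen's inequality together with the defining uniqueness property of $\mu$.

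Concretely, let $g\in\cG$ be Haar-distributed and independent of $(H_1,H_2)\sim\cL_1\times\cL_2$, and write $H_1^g(\bsig)=H_1(g\bsig)$. Since $\cL_1$ is $\cG$-invariant, $H_1^g\sim\cL_1$ for each fixed $g$ and remains independent of $H_2$, so $H_1^g+H_2\sim\cL_1+\cL_2$; averaging over $g$ gives $F(\cL_1+\cL_2)=\bbE_{H_1,H_2,g}\big[\log Z(H_1^g+H_2)\big]$. Using that $\mu$ is $\cG$-invariant, the substitution $\bsig\mapsto g^{-1}\bsig$ inside the integral defining $Z$ yields
\[
Z(H_1^g+H_2)=\int e^{H_1(\bsig)+H_2(g^{-1}\bsig)}\,\mu(\de\bsig),
\]
which has transferred the group action from the symmetric part $H_1$ onto $H_2$. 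Now condition on $(H_1,H_2)$ and apply Jensen's inequality (concavity of $\log$) in the variable $g$ only, followed by Fubini:
\[
\bbE_g\big[\log Z(H_1^g+H_2)\big]\le\log\bbE_g\big[Z(H_1^g+H_2)\big]=\log\int e^{H_1(\bsig)}\,\bbE_g\big[e^{H_2(g^{-1}\bsig)}\big]\,\mu(\de\bsig).
\]
The crucial observation is that for every fixed $\bsig\in\Sigma$ the law of $g^{-1}\bsig$ under Haar measure is a $\cG$-invariant probability measure on $\Sigma$ (using bi-invariance of Haar measure on the compact group $\cG$ together with transitivity of the action), hence equals $\mu$ by the uniqueness hypothesis; therefore $\bbE_g\big[e^{H_2(g^{-1}\bsig)}\big]=\int e^{H_2}\,\de\mu=Z(H_2)$ is a constant free of $\bsig$. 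Plugging this in collapses the right-hand side to $\log\big(Z(H_1)Z(H_2)\big)=\log Z(H_1)+\log Z(H_2)$, and taking expectations over $(H_1,H_2)$ gives $F(\cL_1+\cL_2)\le F(\cL_1)+F(\cL_2)$.

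The only substantive step — and the sole place the hypotheses enter — is the identity $\bbE_g\big[e^{H_2(g^{-1}\bsig)}\big]=Z(H_2)$, i.e.\ that pushing Haar measure through an orbit map reproduces the canonical measure $\mu$; everything else is bookkeeping. What remains is routine measure theory: the Fubini interchanges and the tower property are harmless since all integrands are continuous on compact domains, and the final splitting $\bbE[\log Z(H_1)+\log Z(H_2)]=F(\cL_1)+F(\cL_2)$ is legitimate under the standing assumption that the free energies are finite. If one is uneasy about integrability, one can first truncate $H_1,H_2$ to be uniformly bounded, run the argument verbatim, and then pass to the limit by monotone/dominated convergence.
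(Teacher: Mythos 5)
Your proof is correct: the Haar symmetrization, the Jensen step over $g$, and the identification of the law of $g^{-1}\bsig$ with $\mu$ via uniqueness of the invariant measure are all sound, and the integrability/Fubini issues are harmless for continuous Hamiltonians on a compact space with finite free energies. It is, however, a genuinely different route from the paper's. The paper writes $\log Z(H_1+H_2)=\log Z(H_1)+\log\int e^{H_2(\bsig)}\,\wt\mu(\de\bsig)$ where $\wt\mu$ is the Gibbs measure of $H_1$, proves $\bbE_{H_1}[\wt\mu]=\mu$ (here is where $\cG$-invariance of $\cL_1$ plus uniqueness enters), and applies Jensen over the disorder $H_1$ with $H_2$ held fixed. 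You instead inject an independent Haar element $g$, apply Jensen over $g$ with \emph{both} Hamiltonians fixed, and use only that the orbit measure $\bbE_g[\delta_{g^{-1}\bsig}]$ equals $\mu$ — a fact that needs nothing about $\cL_1$; the $\cG$-invariance of $\cL_1$ is used solely to identify $\bbE[\log Z(H_1^g+H_2)]$ with $F(\cL_1+\cL_2)$. A byproduct of your argument is the pathwise inequality
\begin{equation*}
    \bbE_g\big[\log Z(H_1^g+H_2)\big]\le \log Z(H_1)+\log Z(H_2),
\end{equation*}
valid for arbitrary deterministic continuous $H_1,H_2$, which is a nice deterministic formulation of the symmetrized subadditivity. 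The paper's decomposition, on the other hand, makes the slack explicit as a quenched-versus-annealed Jensen gap in $H_1$ for the Gibbs average of $e^{H_2}$, which is what underlies the tightness discussion at weak disorder and parallels the structure of the quantum proof via Golden--Thompson. Either argument proves the theorem in full.
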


\begin{proof}
Consider the random probability measure $\wt\mu$ defined from $H_{1}$  by
\[
    \wt\mu(\de\bsig)
    =
    \frac{e^{H_{1}(\bsig)} \mu(\de\bsig)}{\int e^{H_{1}(\bsig)} \mu(\de \bsig)}.
\]
Note that $\bbE[\wt\mu(\de\bsig)]=\mu(\de\bsig)$, i.e. for any bounded, measurable function $f:\Sigma\to\bbR$ independent of $\wt\mu$, we have
\begin{equation}
\label{eq:measure-valued-martingale}
   \bbE\lt[\int f(\bsig)\wt\mu(\de\bsig)\rt]=\int f(\bsig)\mu(\de\bsig).
\end{equation}
Indeed, \eqref{eq:measure-valued-martingale} certainly defines \emph{some} probability measure $\mu'$ on the right-hand side. Moreover this probability measure inherits $\cG$-invariance from $\cL_1$. The uniqueness result \cite[Theorem 6.2]{diestel2014joys} mentioned above now implies $\bbE[\wt\mu(\de\bsig)]=\mu(\de\bsig)$ as claimed. Using this, we obtain
\begin{equation}
\label{eq:FN-step-1}
\begin{aligned}
    F(\cL_1+\cL_2)
    &=
    \bbE\lt[
        \log \int e^{H_1(\bsig)+H_2(\bsig)}\mu(\de \bsig)
    \rt]
    \\
    &=
    \bbE\lt[
    \log \int e^{H_1(\bsig)}\mu(\de \bsig)
    \rt]
    +
    \bbE\lt[
        \log \int \frac{e^{H_1(\bsig)+H_2(\bsig)}}
        {\int e^{H_1(\bsig)}\mu(\de \bsig)}
        \mu(\de \bsig)
    \rt]
    \\
    &=
    F(\cL_1)
    +
    \bbE\lt[
    \log
    \int e^{H_2(\bsig)}\wt\mu(\de\bsig)
    \rt]
    \\
    &
   \leq
    F(\cL_1)+F(\cL_2)
    .
\end{aligned}
\end{equation}
In the last step we used \eqref{eq:measure-valued-martingale} together with Jensen's inequality. This concludes the proof.
\end{proof}

Even without symmetry, a weaker form of the above estimate can be obtained with $F(\cL_2)$ replaced by the simple upper bound
\begin{equation}
\label{eq:simple-UB}
    \sup_{\bsig\in\Sigma}\log\mathbb E[e^{H_2(\bsig)}].
\end{equation}
Indeed by Jensen's inequality, \eqref{eq:simple-UB} upper-bounds $\bbE\lt[\log\int e^{H_2(\bsig)}\wt\mu(\de\bsig)\rt]$ for any probability measure $\wt\mu$. 
In the special case that the marginal distributions of $H_1(\bsig)$ and $H_2(\bsig)$ do not depend on $\bsig$, we similarly find that
\[
    F(\cL_1)+F(\cL_2)\leq 
    \log\mathbb E[e^{H_1(\bsig)}]+\log\mathbb E[e^{H_2(\bsig)}]
    =
    \log\mathbb E[e^{H_1(\bsig)+H_2(\bsig)}].
\]
In particular the estimate in Theorem~\ref{thm:main} is asymptotically tight if $\cL_1+\cL_2$ satisfies 
\[
    F(\cL_1+\cL_2)
    \approx
    \log \bbE[e^{H_1(\bsig)+H_2(\bsig)}].
\]
This near-equality holds at weak disorder or high temperature in many examples, including some presented in the next section. The right-hand side is often referred to as the \emph{annealed} free energy of $H_1+H_2$.

Let us also point out that specializing Theorem~\ref{thm:main} to zero temperature by replacing $(H_1,H_2)$ with $(\beta H_1,\beta H_2)$ for large $\beta$ yields only the trivial bound
\[
    \max_{\bsig\in\Sigma}\big( H_1(\bsig)+H_2(\bsig) \big)
    \leq
    \max_{\bsig\in\Sigma}H_1(\bsig)+\max_{\bsig\in\Sigma}H_2(\bsig).
\]
This bound holds with no symmetry assumption, but for finite $\beta$ the symmetry conditions in Theorem~\ref{thm:main} are essential. In fact without symmetry, simple counterexamples exist even on a two-point space $\Sigma=\{-1,1\}$ with $\mu$ uniform and $H_1=H_2$ deterministic. Taking $H_1(1)=H_2(1)=0$ and $H_1(-1)=H_2(-1)=x$ yields
\begin{align*}
    F(\cL_1)+F(\cL_2)&=2\log\lt(\frac{1+e^x}{2}\rt),
    \\
    F(\cL_1+\cL_2)&= \log\lt(\frac{1+e^{2x}}{2}\rt)
\end{align*}
and the latter is easily seen to be strictly larger for all real $x\neq 0$.

\begin{remark}
Free energy subadditivity in the \emph{system size} has been established for mean-field spin glasses and sparse graph models obeying certain convexity properties in \cite{guerra2002thermodynamic,bayati2010combinatorial,huang2018convergence,jagannath2022existence}. These results are fundamentally important as they imply the existence of a limiting asymptotic free energy or ground state value in such models. Theorem~\ref{thm:main} is in a different spirit as the state space $\Sigma$ is fixed while the Hamiltonian varies. As demonstrated by the examples in the next section, our result applies somewhat generically and does not require any convexity conditions. On the other hand, Theorem~\ref{thm:main} is a simpler bound and is usually not tight for low temperatures, while the results of \cite{guerra2002thermodynamic,bayati2010combinatorial,huang2018convergence} are asymptotically tight at all temperatures. (Indeed, our result only implies an $O(\beta^2)$ upper bound for free energies at inverse temperature $\beta$, while $\Theta(\beta)$ is the correct asymptotic in e.g. mean-field spin glass models.)
\end{remark}

\section{Examples}

We discuss several statistical physics models to which Theorem~\ref{thm:main} applies. While we focus on models that are important in their own right, many others are easily constructed by e.g. independently summing some of the random Hamiltonians presented below.

\subsection{Branching Random Walk}

Let $\cT_{N,d}$ be a $d$-ary rooted tree of depth $N$, and fix a probability distribution $\nu$ on $\bbR$ with finite exponential moments. For each vertex $v\in V(\cT_{N,d})$, generate an i.i.d. variable $x_v\sim \nu$. For a leaf $w$ of $\cT_{N,d}$, let $P(w)=(v_0,v_1,\dots,v_N=w)$ be the path to $w$ from the root $v_0$. The branching random walk Hamiltonian assigns to each $w\in V(\cT_{N,d})$ the sum 
\begin{equation}
\label{eq:branching-RW-H}
    H(w)=\sum_{v\in P(w)} x_{v}.
\end{equation}
This model has been studied in e.g. \cite{chauvin1997boltzmann,jagannath2016branching} and is also known as the directed polymer on a tree. A lot of other work including \cite{bramson1978maximal,lalley1987conditional,brunet2011branching} has resulted in a precise understanding of the extreme values, which corresponds to the zero temperature setting.

Here the appropriate transitive symmetry group $\cG$ consists of all root-preserving automorphisms of $\cT_{N,d}$. It is easy to see that the random Hamiltonian in \eqref{eq:branching-RW-H} is $\cG$-invariant for any $\nu$, and that the unique $\cG$-invariant measure on the leaf set $\Sigma=\partial\cT_{N,d}=\cT_{N,d}-\cT_{N-1,d}$ is uniform. Theorem~\ref{thm:main} implies that the associated free energy
\begin{equation}
\label{eq:tree-free-energy}
    F_N(\beta)=\bbE\log \lt(\frac{\sum_{w\in\partial\cT_{N,d}} e^{\beta H(w)}}{d^N}\rt)
\end{equation}
is a subadditive function of $\nu$. Moreover Theorem~\ref{thm:main} is asymptotically tight for small $\beta$: \cite{buffet1993directed,chauvin1997boltzmann} show that for $\beta\leq \beta_{\crit}(\mu,d)$, the limiting free energy agrees with the annealed value
\[
    F(\beta)=\lim_{N\to\infty} F_N(\beta)/N
    =
    \log \bbE^{x\sim\nu}[e^{\beta x} ]
\]
which is clearly additive in $\nu$.

Let us also point out that the generalized random energy model (GREM) Hamiltonian \cite{derrida1985generalization,derrida1986solution} takes the same form as \eqref{eq:tree-free-energy}, but with the depth $N$ fixed and the degree $d$ growing. Here the distribution of $x_v$ may depend on the depth of $v$, which doesn't affect the symmetry used above. It follows that Theorem~\ref{thm:main} applies also to the GREM free energy.

\subsection{Spin Glasses with Gaussian Disorder}
\label{subsec:mixed-p-spin}

Spin glasses give rise to some of the most canonical examples of random Hamiltonians. Theorem~\ref{thm:main} applies to the following quite general family of Ising spin glasses. Let $\Sigma_N=\{-1,1\}^N$ and for $1\leq p\leq P$ and each $(i_1,\dots,i_p)\in [N]^p$, let
\[
    J_{i_1,\dots,i_p}\sim \cN(0,c_{i_1,\dots,i_p})
\]
be a centered Gaussian with arbitrary variance $c_{i_1,\dots,i_p}\geq 0$. We assume the variables $J_{i_1,\dots,i_p}$ are jointly independent and define the Hamiltonian
\begin{equation}
\label{eq:Gaussian-H}
    H_N(\bsig)
    =
    \sum_{p=1}^P
    \frac{1}{N^{(p-1)/2}}
    \sum_{1\leq i_1,\dots,i_p\leq N}
    J_{i_1,\dots,i_p}
    \sigma_{i_1}\dots\sigma_{i_p}.
\end{equation}
To apply Theorem~\ref{thm:main} to $H_N$, we take $\cG=\bbZ_2^N$ which acts on $\Sigma$ by 
\[
    (g\bsig)_i=(-1)^{g_i}\bsig_i,\quad i\in [N].
\]
This is a transitive action preserving the law of $H_N$, and it is easy to see that the $\cG$-invariant measure $\mu$ is uniform on $\Sigma$. Moreover the independent sum $\mathcal L_1+\mathcal L_2$ amounts to adding the vectors $\vec c$ of coefficient variances.
For $\beta\geq 0$ the associated free energy at inverse temperature $\beta$ is
\[
    F_{N,\beta}(\vec c)=\bbE \log \int e^{\beta H_N(\bsig)}\mu(\de \bsig),
\]
for $\mu$ the uniform measure on $\Sigma_N$. Of course $F_{N,\beta}(\vec c)=F_{N}(\beta^2\vec c)$, so we define also $F_{N}(\vec c)=F_{N,1}(\vec c)$. Theorem~\ref{thm:main} then implies the following.
\begin{corollary}
\label{cor:temperature}
    For entrywise non-negative coefficient variances $\vec c$ and $\vec{c'}$, we have
\[
    F_N\big(\vec c+\vec{c'}\big)
    \leq
    F_N(\vec c)
    +
    F_N(\vec {c'}).
\]
In particular for $\beta_1,\beta_2\geq 0$ we have the subadditivity in $\beta^2$ bound
\begin{equation}
\label{eq:beta}
    F_{N,\sqrt{\beta_1^2+\beta_2^2}}\lt(\vec c\rt)
    \leq
    F_{N,\beta_1}(\vec c)
    +
    F_{N,\beta_2}(\vec c).
\end{equation}
\end{corollary}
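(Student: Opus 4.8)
The plan is to read Corollary~\ref{cor:temperature} as a direct specialization of Theorem~\ref{thm:main}, so that essentially all the work lies in (a) exhibiting a compact group $\cG$ acting transitively on $\Sigma_N$ with uniform invariant measure, (b) checking the law of $H_N$ is $\cG$-invariant, and (c) identifying the independent-sum operation $\cL_1+\cL_2$ with addition of the variance vectors. For (a) I would take $\cG=\bbZ_2^N$ acting on $\Sigma_N=\{-1,1\}^N$ by coordinatewise sign flips $(g\bsig)_i=(-1)^{g_i}\bsig_i$, as in the text. This action is simply transitive, so $\Sigma_N$ is a homogeneous space for $\cG$; since $\cG$ permutes the $2^N$ configurations regularly, the unique $\cG$-invariant probability measure is the uniform measure $\mu$, which is exactly the reference measure appearing in $F_N$.

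For (b), note that for $g\in\cG$ the transformed Hamiltonian $H_N^g(\bsig)=H_N(g\bsig)$ is again of the form \eqref{eq:Gaussian-H} but with each coefficient $J_{i_1,\dots,i_p}$ replaced by $(-1)^{g_{i_1}+\dots+g_{i_p}}J_{i_1,\dots,i_p}$. Since the $J_{i_1,\dots,i_p}$ are jointly independent centered Gaussians, multiplying each by a deterministic sign leaves their joint law invariant, so $H_N^g\ed H_N$ and $\cL$ is $\cG$-invariant (for every entrywise non-negative variance vector). For (c), if $H_1\sim\cL_1$ and $H_2\sim\cL_2$ are independent spin glass Hamiltonians with variance vectors $\vec c$ and $\vec{c'}$, then $H_1+H_2$ again has the form \eqref{eq:Gaussian-H}, now with coefficients $J^{(1)}_{i_1,\dots,i_p}+J^{(2)}_{i_1,\dots,i_p}\sim\cN(0,c_{i_1,\dots,i_p}+c'_{i_1,\dots,i_p})$, jointly independent; hence $\cL_1+\cL_2$ is the spin glass law with variance vector $\vec c+\vec{c'}$. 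Applying Theorem~\ref{thm:main} (with $\cL_1$ in the role of the $\cG$-invariant law) then gives $F_N(\vec c+\vec{c'})\le F_N(\vec c)+F_N(\vec{c'})$.

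The temperature statement \eqref{eq:beta} follows by combining this with the elementary rescaling $F_{N,\beta}(\vec c)=F_N(\beta^2\vec c)$, obtained by absorbing $\beta$ into the Gaussian coefficients so that $\beta H_N$ with variances $\vec c$ has the law of $H_N$ with variances $\beta^2\vec c$. Indeed,
\[
    F_{N,\sqrt{\beta_1^2+\beta_2^2}}(\vec c)
    =F_N\big((\beta_1^2+\beta_2^2)\vec c\big)
    =F_N\big(\beta_1^2\vec c+\beta_2^2\vec c\big)
    \le F_N(\beta_1^2\vec c)+F_N(\beta_2^2\vec c)
    =F_{N,\beta_1}(\vec c)+F_{N,\beta_2}(\vec c).
\]
I do not expect a genuine obstacle: the argument is bookkeeping on top of Theorem~\ref{thm:main}. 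The only point deserving a moment's care is that sign flips act on the Gaussian coefficients by deterministic $\pm1$ multiplication, so it is precisely the symmetry of $\cN(0,c)$ about the origin (together with independence) that supplies the joint distributional invariance Theorem~\ref{thm:main} requires; and one should note that both $\cL_1$ and $\cL_2$ are in fact $\cG$-invariant, so the hypothesis is satisfied with either choice of which law plays the privileged role.
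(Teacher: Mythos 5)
Your proposal is correct and follows the paper's own route exactly: take $\cG=\bbZ_2^N$ acting by sign flips (transitive, with uniform invariant measure), observe that the law of $H_N$ is $\cG$-invariant because deterministic signs preserve the joint law of independent centered Gaussians, identify the independent sum with addition of the variance vectors, apply Theorem~\ref{thm:main}, and deduce \eqref{eq:beta} from the rescaling $F_{N,\beta}(\vec c)=F_N(\beta^2\vec c)$. No gaps; this is precisely the paper's argument.
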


The above setup includes many spin glasses, all of which thus satisfy Corollary~\ref{cor:temperature}. When $c_{i_1,\dots,i_p}=c_p$ depends only on $p$, we recover the well studied mixed $p$-spin model (see e.g. \cite{panchenko2013sherrington}) whose covariance 
\[
    \xi(\bsig^1,\bsig^2)
    =
    \bbE[H(\bsig^1) H(\bsig^2)]
    =
    \sum_{p=1}^P c_p \lt(\frac{\langle \bsig^1,\bsig^2\rangle}{N}\rt)^p
\]
depends only on the overlap $\langle \bsig^1,\bsig^2\rangle/N$. In this case we may write $F_{N,\beta}(\vec c)=F_{N,\beta}(\xi)$. Hence we find that for Ising mixed $p$-spin models,
\begin{align}
\label{eq:pspin-subadditive}
    F_N(\xi_1+\xi_2)&\leq F_N(\xi_1)+F_N(\xi_2)
    ,
    \\
\label{eq:pspin-temperature} 
    F_{N,\sqrt{\beta_1^2+\beta_2^2}}\lt(\xi\rt)
    &\leq
    F_{N,\beta_1}(\xi)
    +
    F_{N,\beta_2}(\xi).
\end{align}
When $c_1=0$ and $\beta\leq \beta_{\crit}(c_2,\dots,c_P)$ is small, the replica-symmetric formula 
\[
    \lim_{N\to\infty} \frac{F_N(\beta \xi)}{N}=\beta^2 \xi(1)
\]
holds. Thus \eqref{eq:pspin-temperature}  is asymptotically sharp as $N\to\infty$ for $\beta_1^2+\beta_2^2\leq \beta_{\crit}^2$ in this case. Another notable example of the form \eqref{eq:Gaussian-H} is the Edwards-Anderson model \cite{edwards1975theory,newman1997metastate} in which $(c_{v_1,v_2})_{v_1,v_2\in V(G)}$ is the adjacency matrix of a finite lattice. 
We mention that \cite{contucci2007correlation} considers the family of Hamiltonians \eqref{eq:Gaussian-H} and shows that $F(\vec c)$ is coordinate-wise increasing in $\vec c$.

We remark that for mixed $p$-spin models, we were surprisingly unable to prove \eqref{eq:pspin-subadditive} in the $N\to\infty$ limit directly from the (enormously more difficult) Parisi formula. In the appendix we derive from the Auffinger-Chen representation of the Parisi formula the weaker statement that free energy relative to counting measure on $\{-1,1\}^N$ is subadditive in $\xi$.

Instead of Ising spins $\Sigma_N=\{-1,1\}^N$, one can also consider a spherical state space or even a product of spheres. Since the Hamiltonian must now be invariant relative to a larger symmetry group, the applications are not quite as general as above. A still broad family of examples comes from setting
\[
    \Sigma=\prod_{i=1}^r \mathbb S^{N_i-1}(\sqrt{N_i})
\]
for $N=\sum_{i=1}^r N_i$, where $\mathbb S^{N_i-1}(\sqrt{N_i})$ is the sphere in $\mathbb R^{N_i}$ of radius $\sqrt{N_i}$. The corresponding transitive symmetry group $\cG= \prod_{i=1}^r O(N_i)$ is a product of orthogonal groups $O(N_i)$ with the natural action, and the unique invariant probability measure $\mu$ is the natural choice of uniform measure.
Let $s:[N]\to [r]$ send the $N_i$ coordinates in the $\mathbb S^{N_i-1}(\sqrt{N_i})$ factor above to $i\in [r]$.
It is not difficult to see that Hamiltonians in \eqref{eq:Gaussian-H} are $\cG$-invariant if $c_{i_1,\dots,i_p}=c_{s(i_1),\dots,s(i_p)}$. Therefore the subadditivity results in Corollary~\ref{cor:temperature} hold for such $\vec c$. These models are known as \emph{multi-species} spin glasses and are the subject of much recent work \cite{barra2015multi,panchenko2015free,mourrat2021nonconvex,kivimae2021ground,subag2021tap1,subag2021tap,subag2021second,bates2022free,bates2021crisanti}.

Yet another generalization is to consider pairs $(\bsig^1,\bsig^2)\in\bbS^{N-1}(\sqrt N)$ with the overlap constraint $\langle \bsig^1,\bsig^2\rangle = RN$ for some $R\in [-1,1]$. Our discussion above goes through essentially unchanged for $\bsig^1,\bsig^2$ on the sphere $\bbS^{N-1}(\sqrt N)$ because all such pairs are related by a simultaneous $O(N)$ action. For Ising spin glasses, the same is true once $\bbZ_2^N$ is replaced by the full symmetry group of the cube which includes also coordinate permutations. In both these cases, the analog of Corollary~\ref{cor:temperature} applies to the two-replica Hamiltonian  
\begin{equation}
\label{eq:two-replica}
    \overline{H}_N(\bsig^1,\bsig^2)
    =
    H_N(\bsig^1)+H_N(\bsig^2).
\end{equation}
Thus we deduce subadditivity in $\vec c$ for 
\[
    F_{N,\beta}(\vec c)\equiv 
    \bbE
    \log
    \int e^{\beta H_N(\bsig^1)+ \beta H_N(\bsig^2)}
    \mu_R(d(\bsig^1,\bsig^2)),
\]
where $\mu_R$ is the unique $\cG$-invariant measure on pairs $(\bsig^1,\bsig^2)\in\bbS^{N-1}(\sqrt N)\times \bbS^{N-1}(\sqrt N)$ with $\langle \bsig^1,\bsig^2\rangle = RN$. 

The free energies of such constrained pairs appear in Talagrand's proof \cite{talagrand2006parisi} of the Parisi formula and have also been used to study disorder chaos \cite{chen2014chaos,chen2017parisi,chen2015disorder,chen2017variational,chen2018disorder,chen2018energy}. They are also a special case of the so-called vector spin models \cite{panchenko2007overlap,panchenko2018free,ko2020free}. Further generalizations with multiple correlated Hamiltonians and more replicas have been used in relation with the overlap gap property to establish computational barriers against efficiently optimizing $H_N$ \cite{chen2019suboptimality,GJW20,huang2021tight} -- on the sphere, these are still symmetric enough to conclude subadditivity, but we omit the details.

\subsection{Orthogonally Invariant Spin Glasses}

The orthogonally invariant Sherrington-Kirkpatrick model is a spin glass with dependent couplings studied in \cite{marinari1994replica,parisi1995mean,bhattacharya2016high,fan2021replica,barbier2022marginals,fan2022tap}. To define it, one chooses a diagonal $N\times N$ matrix $\Lambda_N$ and sets
\[
    A_N=O_N \Lambda_N O_N^{\top}
\]
for a Haar-random orthogonal matrix $O_N\in O(N)$. The associated random Hamiltonian is $H_N(\bsig)=\langle \bsig, A_N\bsig\rangle$ for $\bsig\in\Sigma_N=\{-1,1\}^N$. We let $F_N(\nu)$ be the corresponding free energy when the entries of $\Lambda$ are drawn i.i.d. from the compactly supported measure $\nu$. Such Hamiltonians are $O(N)$-invariant by definition, and in particular under the natural $\bbZ_2^N$-action on $\Sigma_N$. Hence the conditions of Theorem~\ref{thm:main} are met.

In this case, addition of Hamiltonians corresponds to additive free convolution. Indeed if the entries of $\Lambda_{N,i}$ are drawn i.i.d. from $\nu_i$ for $i\in \{1,2\}$, and if $O_{N,1},O_{N,2}\in O(N)$ are Haar-random and independent, then the sum 
\[
    A_{N,1}+A_{N,2}=O_{N,1}\Lambda_{N,1} O_{N,1}^{\top}+O_{N,2}\Lambda_{N,2} O_{N,2}^{\top}
\]
is orthogonally invariant with random spectrum converging in probability (in e.g. $W_2$ metric) to the additive free convolution $\nu_1\boxplus \nu_2$ as $N\to\infty$. Moreover it follows from \cite[Proof of Proposition 1.1]{bhattacharya2016high} that the normalized free energy of the orthogonally invariant SK model is Wasserstein-continuous in the spectrum, leading to the approximate subadditivity relation
\begin{equation}
\label{eq:free-conv-ineq}
    F_N(\nu_1\boxplus\nu_2)\leq F_N(\nu_1)+F_N(\nu_2)+o_N(N)
\end{equation}
for the orthogonally invariant SK model. As in the previous subsection, this bound is asymptotically tight in the replica-symmetric phase. Indeed here the limiting free energy $F(\nu)=\lim_{N\to\infty} \frac{F_N(\nu)}{N}$ agrees with the annealed value, which is given by an integrated $R$-transform of $\nu$ (\cite[Theorem 1.2 and Equation (1.8)]{bhattacharya2016high}) and is therefore additive under free convolution.

A similar construction is possible for tensors as well. For $p\geq 3$ one may construct a random $p$-tensor $A_N^{(p)}$ by starting with a deterministic $p$-tensor $\Lambda_N^{(p)}$ and conjugating on all $p$ ``sides" by an independent Haar-random matrix $O_N^{(p)}\in O(N)$. The resulting orthogonally invariant mixed $p$-spin model Hamiltonian
\[
    H_N(\bsig)=\sum_{p=2}^P \langle A_N^{(p)},\bsig^{\otimes p}\rangle
\]
has law which is $O(N)$-invariant and hence $\bbZ_2^N$-invariant.
Thus for deterministic sequences $(\Lambda_N^{(2)},\dots,\Lambda_N^{(P)})$, the subadditivity result Theorem~\ref{thm:main} still applies. However it is unclear how to extend the clean statement \eqref{eq:free-conv-ineq} beyond matrices; this would probably require a theory of free probability for tensors.

\subsection{Random Constraint Satisfaction Problems}

In a typical random constraint satisfaction problem, one equips the state space $\Sigma_{N}=\{-1,1\}^N$ with Hamiltonian
\[
    H_N(\bsig)=-\sum_{j=1}^M\theta_j(\bsig_{i_1^j},\dots,\bsig_{i_k^j})
\]
for $M=\alpha N$ or $M\sim \Poisson(\alpha N)$. Here each $\theta_j:\{-1,1\}^k\to \bbR_{\geq 0}$ is an i.i.d. non-negative function invariant in law under the action of $\bbZ_2^N$, and the indices $i_{\ell}^j\in[N]$ are i.i.d. as well. For example random $k$-SAT and NAE $k$-SAT can be represented in the above way, where $\theta_j$ equals $0$ when the corresponding clause is satisfied and $1$ otherwise. These models have been studied in great detail \cite{achlioptas2004threshold,coja2016asymptotic,ding2016satisfiability,ding2022proof,nam2022one}.

In this setting, applying Theorem~\ref{thm:main} shows the free energy is subadditive in the clause density $\alpha$. Specialized to $\beta H_N$ for large $\beta$, this roughly says that the typical solution density is submultiplicative in $\alpha$, which is easy to see directly. (However technically we cannot set $\beta=\infty$ in Theorem~\ref{thm:main} to obtain ``hard" constraints as then $\log(Z)=\log(0)$ holds with positive probability.)

The random Ising perceptron model (see \cite{talagrand2011mean1,talagrand2011mean2,ding2019capacity}) is similarly $\bbZ_2^N$-invariant. Here $k=N$ with deterministic indices $(i_1^j,\dots,i_N^j)=(1,2,\dots,N)$ and the functions take the form $\theta_j(\bsig)=\varphi(\langle g_j,\bsig\rangle)$ for a deterministic function $\varphi:\bbR\to\bbR$ and Gaussian disorder vector $g_j\sim \cN(0,I_N)$. Theorem~\ref{thm:main} applies also to this model, as well as to the spherical analog where $\Sigma= \bbS^{N-1}(\sqrt N)$ is equipped with uniform measure.

\subsection{Random Field Ising Model and Spiked Matrices}

The random field Ising model (RFIM) was introduced by \cite{imry1975random}, see also \cite{bricmont1988phase,aizenman1990rounding,chatterjee2019central,ding2021exponential}. Here one considers a vertex set such as $V=[N]^d$ with ferromagnetic nearest neighbor interactions and a random external field $\bh\in \bbR^V$, giving rise to a Hamiltonian with independent two contributions:
\begin{align}
\label{eq:RFIM}
    H(\bsig)&=
    H_1(\bsig)+H_2(\bsig)
    ,
    \quad
    \bsig\in \{-1,1\}^V;
    \\
\nonumber
    H_1(\bsig)&=\sum_{v\in V}
    h_v \sigma_v,
    \\
\nonumber
    H_2(\bsig)&=\sum_{(v,w)\in V:~\|v-w\|=1}
    \sigma_v \sigma_w.
\end{align}

The random field term $\sum_{v\in V} h_v \sigma_v$ is invariant under the natural $\bbZ_2^V$ action of $\{-1,1\}^V$ if the values $(h_v)_{v\in V}$ are jointly independent and each $h_v$ has negation-invariant law. Since Theorem~\ref{thm:main} requires only that $H_1$ be $\cG$-invariant, this implies an upper bound on the free energy in \eqref{eq:RFIM} with $\mu$ the uniform measure on $\{-1,1\}^V$. As the expected free energy of $H_1$ is just $\sum_{v\in V} \bbE[\log \cosh(h_v)]$, we obtain an elementary upper bound on the RFIM free energy in terms of the ordinary Ising model free energy. The same techinque also gives a simpler proof of the lower bound in \cite[Theorem A.1]{frohlich1987some}.

Another natural example of a random Hamiltonian which is a sum of two different terms is the spiked matrix or tensor model which has been studied extensively in e.g. \cite{johnstone2001distribution,baik2005phase,benaych2011eigenvalues,donoho2018optimal,perry2018optimality,perry2016statistical,chen2019phase,arous2019landscape}. The spiked matrix model is defined by starting with a random matrix $A$ and adding a random rank $1$ spike to obtain $A+\bv\bv^{\top}$. The resulting Hamiltonian is $H(\bsig)=\langle \bsig,A\bsig\rangle+\langle \bsigma,\bv\rangle^2$. If for example $\bv$ is uniform on the sphere or $\{-1,1\}^N$, then the spike is $O(N)$ or $\bbZ_2^N$ invariant. As explained in e.g. \cite{chen2019phase}, the free energy in a spiked model is intimately related with detectability of the spike.
In these models as well, Theorem~\ref{thm:main} applies when either $A$ or $\bv$ obeys the requisite symmetry property and gives a simple free energy upper bound separating the different interactions in the Hamiltonian.


\section{Generalization to Quantum Hamiltonians}

A version of our result extends to quantum Hamiltonians. We restrict attention to operators on finite dimensional spaces, though this is probably not essential. Let $M$ be a random $N\times N$ Hermitian matrix with law $\cM$. Then the associated quantum partition function and average free energy are given by
\begin{align}
\label{eq:Z-M}
    Z(M)&=\Tr\lt(e^M\rt)/N;
    \\
\label{eq:F-M}
    F(\cM)&=\bbE^{M\sim \cM}[\log Z(M)].
\end{align}
In the case that $M$ is almost surely diagonal in a fixed orthogonal basis $(v_1,\dots,v_N)$, $M$ can be viewed as a classical Hamiltonian taking value $\lambda_i(M)$ on the state $\bsig=v_i$. $Z(M)$ and $F(\cM)$ defined above then agree with the classical free energy for the uniform reference measure $\mu$ on $\{v_1,\dots,v_N\}$.

Similarly to before, let $\cM_1+\cM_2$ denote the law of the independent matrix sum $M_1+M_2$ for $(M_1,M_2)\sim \cM_1\times \cM_2$. 
We say that $\cM$ is $\cG$-invariant for a compact group $\cG\subseteq U(N)$ of unitary transformations if for $M\sim \cM$ independent of any fixed $g\in\cG$, we have $gM\sim \cM$.

Our generalization of the symmetry assumption is as follows. With $\bbE^{g\in \cG}$ denoting expectation relative to Haar measure, we say that $\cG$ is \emph{$M$-symmetrizing} if
\begin{equation}
\label{eq:invariant-M}
    \bbE^{g\in\cG}[gMg^{-1}]= \frac{\Tr(M) I_N}{N}.
\end{equation}
(Here $\Tr(M)$ is a scalar and $I_N$ denotes the identity matrix.)
For diagonal $M$, the group $S_N\subseteq U(N)$ of permutation matrices is $M$-symmetrizing, as are all of its transitive subgroups. This recovers the setting of Theorem~\ref{thm:main} for finite $\Sigma$. As another example, the subgroup of \textbf{signed} permutation matrices is $M$-symmetrizing for all not-necessary-diagonal $M$. With these definitions in place, we can now state a generalization of Theorem~\ref{thm:main} to quantum free energies.

\begin{theorem}
\label{thm:main-M}
Suppose $\cM_1$ is $\cG$-invariant and $\cG$ is almost surely $e^{M_1}$-symmetrizing for $M_1\sim\cM_1$. Then
\[
    F(\cM_1+\cM_2)\leq F(\cM_1)+F(\cM_2).
\]
\end{theorem}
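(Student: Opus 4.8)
The plan is to mimic the proof of Theorem~\ref{thm:main} as closely as possible, replacing the tilted probability measure $\wt\mu$ with a tilted \emph{density matrix}. Concretely, given $M_1$, define the random positive-semidefinite matrix
\[
    \wt\rho = \frac{e^{M_1}}{\Tr(e^{M_1})}.
\]
This is a density matrix ($\Tr\wt\rho=1$), and it plays the role of $\wt\mu$. The first key step is to show the analogue of \eqref{eq:measure-valued-martingale}, namely that $\bbE[\wt\rho] = I_N/N$. Here is where the two hypotheses enter: for fixed $g\in\cG$, $\cG$-invariance of $\cM_1$ gives $g M_1 g^{-1}\overset{d}{=}M_1$ (writing the action as conjugation, consistent with ``$gM\sim\cM$''), hence $\bbE[\wt\rho]$ is conjugation-invariant under every $g\in\cG$; equivalently $\bbE[\wt\rho] = \bbE^{g\in\cG}[g\,\bbE[\wt\rho]\,g^{-1}]$. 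Now since $\wt\rho$ is a positive multiple of $e^{M_1}$ and $\cG$ is almost surely $e^{M_1}$-symmetrizing, \eqref{eq:invariant-M} applied to $M=e^{M_1}$ (after normalizing by the trace) gives $\bbE^{g\in\cG}[g\wt\rho g^{-1}] = \Tr(\wt\rho)I_N/N = I_N/N$ pointwise, and taking expectations yields $\bbE[\wt\rho]=I_N/N$.

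The second step is the chain of (in)equalities paralleling \eqref{eq:FN-step-1}. Using the Golden--Thompson-free algebra of traces of sums only at the level of the logarithm bookkeeping — actually we do not need Golden--Thompson; we just write
\[
    F(\cM_1+\cM_2)
    = \bbE\Big[\log \tfrac{\Tr(e^{M_1+M_2})}{N}\Big]
    = \bbE\Big[\log\tfrac{\Tr(e^{M_1})}{N}\Big]
      + \bbE\Big[\log\tfrac{\Tr(e^{M_1+M_2})}{\Tr(e^{M_1})}\Big]
    = F(\cM_1) + \bbE\big[\log \Tr(\wt\rho^{1/2} e^{M_2}\wt\rho^{1/2})\big],
\]
where the last equality uses cyclicity of the trace to write $\Tr(e^{M_1+M_2})/\Tr(e^{M_1})$; the subtlety is that $e^{M_1+M_2}\ne e^{M_1}e^{M_2}$ in general, so I will instead use the variational (Gibbs) characterization $\log\Tr(e^{M_1+M_2}) = \sup_{\rho}\{\Tr(\rho M_2) + \Tr(\rho M_1) - \Tr(\rho\log\rho)\}$ together with $\log\Tr(e^{M_1}) = \sup_\rho\{\Tr(\rho M_1)-\Tr(\rho\log\rho)\}$ attained at $\rho=\wt\rho$, to obtain the clean bound
\[
    \log\Tr(e^{M_1+M_2}) - \log\Tr(e^{M_1}) \;\le\; \Tr(\wt\rho\, M_2) + \big(\log\Tr(e^{M_2}) - \log\Tr(e^{M_2})\big)\!\ldots
\]
— more precisely, $\log\Tr(e^{M_1+M_2})\le \log\Tr(e^{M_1}) + \log\Tr(e^{M_2}) $ would be too lossy, so the right move is: $\log\tfrac{\Tr(e^{M_1+M_2})}{\Tr(e^{M_1})} \le \log \Tr(e^{M_2}\wt\rho)$ where $\Tr(e^{M_2}\wt\rho)$ means the appropriate symmetrized product, via Peierls--Bogoliubov / the Gibbs variational principle. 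Then one takes $\bbE$ over $M_1$, uses concavity of $\log$ (Jensen) to pull the expectation inside, invokes $\bbE[\wt\rho]=I_N/N$ from Step~1, and is left with $\log\Tr(e^{M_2}I_N/N) = \log Z(M_2)$; a final expectation over $M_2$ gives $F(\cM_2)$.

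The main obstacle is making the middle step rigorous in the noncommutative setting: unlike the classical case where $\int e^{H_1+H_2}/\int e^{H_1} = \int e^{H_2}\,\wt\mu$ is an exact identity, here $M_1$ and $M_2$ need not commute, so I expect to need a genuine operator inequality — the cleanest being the Peierls--Bogoliubov inequality $\log\Tr\!\big(e^{A+B}\big) \ge \log\Tr\!\big(e^A\big) + \tfrac{\Tr(e^A B)}{\Tr(e^A)}$, or equivalently the Gibbs variational principle, to control $\bbE\big[\log\tfrac{\Tr(e^{M_1+M_2})}{\Tr(e^{M_1})}\big]$ from above by $\bbE\big[\log\Tr(e^{M_2}\wt\rho)\big]$. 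Actually the right tool is the complementary bound: by the Gibbs variational principle $\log\Tr(e^{M_1+M_2}) = \sup_\rho\{\Tr(\rho(M_1+M_2)) + S(\rho)\}$ with $S(\rho)=-\Tr(\rho\log\rho)$, so plugging the non-optimal $\rho=\wt\rho^{1/2} e^{M_2}\wt\rho^{1/2}/Z'$ is awkward; cleaner is to note $\log\Tr(e^{M_1+M_2}) - \log\Tr(e^{M_1}) \le \log\Tr(\wt\rho^{1/2}e^{M_2}\wt\rho^{1/2})$ is exactly a known ``Gibbs'' inequality when $\wt\rho\propto e^{M_1}$ (this is the statement that relative free energy is bounded by the tilted partition function, and it follows from Jensen applied to the Gibbs state of $M_1$ — i.e. from $\Tr(\wt\rho\log\tfrac{\wt\rho^{1/2}e^{M_2}\wt\rho^{1/2}/Z'}{\wt\rho})\ge 0$, Klein's inequality). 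Once that operator inequality is in hand, everything else is the same Jensen-plus-martingale argument as in Theorem~\ref{thm:main}, with the scalar average $\bbE[\wt\mu]=\mu$ replaced by the matrix average $\bbE[\wt\rho]=I_N/N$, and the final $\log\Tr(e^{M_2}/N) = \log Z(M_2)$ reproducing $F(\cM_2)$ after taking expectation over $M_2$. I would double-check the edge cases (degenerate spectra, the almost-sure qualifier on the symmetrizing property) but expect no difficulty there.
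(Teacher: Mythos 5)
Your overall architecture is the same as the paper's: tilt by the Gibbs state $\wt\rho=e^{M_1}/\Tr(e^{M_1})$, use $\cG$-invariance of $\cM_1$ together with the almost-sure $e^{M_1}$-symmetrizing hypothesis to get $\bbE[\wt\rho]=I_N/N$ (this part of your argument is correct and is essentially how the paper uses the two hypotheses), and finish with Jensen to produce $F(\cM_2)$. The gap is in the middle step, which you yourself flag as the main obstacle and then do not actually close. The inequality you need,
\[
    \log\frac{\Tr\lt(e^{M_1+M_2}\rt)}{\Tr\lt(e^{M_1}\rt)}
    \;\leq\;
    \log\Tr\lt(\wt\rho^{1/2}e^{M_2}\wt\rho^{1/2}\rt)
    =\log\frac{\Tr\lt(e^{M_1}e^{M_2}\rt)}{\Tr\lt(e^{M_1}\rt)},
\]
is, after clearing the denominator and using cyclicity, precisely the Golden--Thompson inequality $\Tr(e^{M_1+M_2})\leq\Tr(e^{M_1}e^{M_2})$. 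So your remark that ``we do not need Golden--Thompson'' is not right: your key step \emph{is} Golden--Thompson in disguise, and it is a genuinely nontrivial matrix inequality (standard proofs go through Lie--Trotter plus trace--power inequalities, log-majorization, or interpolation), not something that drops out of Klein's inequality or the Gibbs variational principle by the one-line argument you sketch. Indeed, what the variational principle/Peierls--Bogoliubov actually gives is the bound in the unhelpful direction, $\Tr(\wt\rho\,M_2)\leq\log\frac{\Tr(e^{M_1+M_2})}{\Tr(e^{M_1})}$ (a lower bound on the quantity you must bound above), or an upper bound by $\Tr(\rho^* M_2)$ with $\rho^*$ the Gibbs state of $M_1+M_2$, which is not comparable to $\log\Tr(\wt\rho\,e^{M_2})$ in any obvious way; moreover your displayed ``Klein'' inequality $\Tr\big(\wt\rho\log\frac{\sigma}{\wt\rho}\big)\geq 0$ has the sign of relative entropy backwards. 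The paper resolves exactly this point by invoking Golden--Thompson explicitly, then proceeding with the Jensen-plus-symmetrization step as you do. So: cite and use Golden--Thompson (or supply one of its genuine proofs) at this juncture, and the rest of your argument goes through; as written, the proof is incomplete at its crucial noncommutative step.
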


\begin{proof}
    Recall the Golden-Thompson inequality \cite{golden1965lower,thompson1965inequality,forrester2014golden}: for all Hermitian matrices $M_1$ and $M_2$,
    \begin{equation}
    \label{eq:GT}
        \Tr(e^{M_1+M_2})\leq \Tr(e^{M_1} e^{M_2})\,.
    \end{equation}
    Using Jensen's inequality in the final step below, we find:
\begin{align*}
    F(\cM_1+\cM_2)
    &=
    \bbE\lt[
        \log \Tr\lt(e^{M_1+M_2}\rt)/N
    \rt]
    \\
    &=
    \bbE\lt[
        \log \Tr\lt(e^{M_1}\rt)/N
    \rt]
    +
    \bbE\lt[
        \log \frac{\Tr\lt(e^{M_1+M_2}\rt)}{ \Tr\lt(e^{M_1}\rt)}
    \rt]
    \\
    &\stackrel{\text{\eqref{eq:GT}}}{\leq}
    F(\cM_1)
    +
    \bbE\lt[
        \log \frac{\Tr\lt(e^{M_1}e^{M_2}\rt)}{\Tr\lt(e^{M_1}\rt)}
    \rt]
    \\
    &\leq
    F(\cM_1)
    +
    \bbE^{M_2\sim\cM_2}\lt[
        \log  \bbE^{M_1\sim\cM_1}\lt[\frac{\Tr\lt(e^{M_1}e^{M_2}\rt)}{\Tr\lt(e^{M_1}\rt)}
        \rt]
    \rt]
    .
\end{align*}
Next recall that $e^{gM_1g^{-1}}=g e^{M_1}g^{-1}$ for unitary $g$ and in particular $\Tr(e^{gM_1g^{-1}})=\Tr(e^{M_1})$. Using $\cG$-invariance of $\cM_1$ in the first step, we find that for any fixed $M_2$,
\begin{align*}
    \bbE^{M_1\sim\cM_1}
    \lt[
    \frac{\Tr\lt(e^{M_1}e^{M_2}\rt)}{\Tr\lt(e^{M_1}\rt)}
    \rt]
    &=
    \bbE^{M_1\sim\cM_1}
    \bbE^{g\in\cG}
    \lt[
    \frac{\Tr\lt(e^{gM_1g^{-1}}e^{M_2}\rt)}{\Tr\lt(e^{gM_1g^{-1}}\rt)}
    \rt]
    =
    \bbE^{M_1\sim\cM_1}
    \bbE^{g\in\cG}
    \lt[
    \frac{\Tr\lt(e^{gM_1g^{-1}}e^{M_2}\rt)}{\Tr\lt(e^{M_1}\rt)}
    \rt]
    \\
    &=
    \bbE^{M_1\sim\cM_1}
    \lt[
    \frac{\Tr\lt( \bbE^{g\in\cG}[e^{gM_1g^{-1}}]e^{M_2}\rt)}{\Tr\lt(e^{M_1}\rt)}
    \rt]
    =
    \bbE^{M_1\sim\cM_1}
    \lt[
    \frac{\Tr\lt( \bbE^{g\in\cG}[ge^{M_1}g^{-1}]e^{M_2}\rt)}{\Tr\lt(e^{M_1}\rt)}
    \rt]
    \\
    &\stackrel{\eqref{eq:invariant-M}}{=}
    \bbE^{M_1\sim\cM_1}
    \lt[
    \frac{\Tr(e^{M_1})\cdot \Tr\lt(
    e^{M_2}\rt)}{N\cdot \Tr\lt(e^{M_1}\rt)}
    \rt]
    =
    \Tr(e^{M_2})/N.
\end{align*}
Combining the above displays completes the proof.
\end{proof}

In the next two subsections, we explain how to apply Theorem~\ref{thm:main-M} to the quantum SK and SYK models. In Remark~\ref{rem:quantum-general}, we briefly explain a more complicated model that subsumes both while still obeying subadditivity.

\subsection{Application to the Quantum SK Model}

The quantum SK model was introduced in \cite{ray1989sherrington,goldschmidt1990ising}. Following rigorous results in \cite{crawford2007thermodynamics,leschke2021free}, its free energy was determined at all temperatures for constant transverse field through a connection to (classical) vector spin glasses in \cite{adhikari2020free}. We consider a variant with Gaussian transverse field. To define the model, we first recall the $2\times 2$ Pauli matrices
\begin{equation}
\label{eq:pauli}
    \sigma^x
    =
    \begin{pmatrix}
    0&1\\
    1&0
    \end{pmatrix}
    ,\qquad
    \sigma^y
    =
    \begin{pmatrix}
    0&-i\\
    i&0
    \end{pmatrix}
    ,\qquad
    \sigma^z
    =
    \begin{pmatrix}
    1&0\\
    0&-1
    \end{pmatrix}
    \,.
\end{equation}
Together with the identity $I_2$, the Pauli matrices form a basis for the $2\times 2$ Hermitian matrices. Moreover they each square to $I_2$ and satisfy the relations
\begin{equation}
\label{eq:pauli-relations}
\begin{aligned}
    \sigma^x \sigma^y 
    &=
    i \sigma^z
    =
    -\sigma^y \sigma^x
    \\
    \sigma^y \sigma^z
    &=
    i \sigma^x
    =
    -\sigma^z \sigma^y
    \\
    \sigma^z \sigma^x
    &=
    i \sigma^y
    =
    -\sigma^x \sigma^z.
\end{aligned}
\end{equation}

Fix a positive integer $m$, the number of interacting particles. For $i\in [m]$, let $\sigma^x_i:(\bbC^2)^{\otimes m}\to (\bbC^2)^{\otimes m}$ be the linear operator which acts by $\sigma^x$ on the $i$-th tensor factor, and by the identity on the others:
\[
    \sigma^x_i 
    =
    I_2\otimes\dots\otimes \sigma^x\otimes\dots\otimes I_2.  
\]
Similarly define $\sigma^y_i$ and $\sigma^z_i$. The quantum SK Hamiltonian with Gaussian transverse field is the random operator $M_m:(\bbC^2)^{\otimes m}\to (\bbC^2)^{\otimes m}$ given by
\begin{equation}
\label{eq:QSK-def}
    M_m
    =
    \frac{\beta}{\sqrt m}
    \sum_{1\leq i,j\leq m}
    J_{i,j} \sigma^z_i \sigma^z_j
    +
    h\sum_{i=1}^m J_i\sigma^x_i.
\end{equation}
Here $\beta$ and $h$ are constants while as usual $J_{i,j}$ and $J_i$ are i.i.d. standard Gaussians. The corresponding free energy is 
\[
    F_m^{\QSK}(\beta,h)
    =
    \bbE
    \log\lt(
    \Tr(e^{M_m})/2^m
    \rt).
\]
It should be noted that taking $h=0$ recovers the classical SK model, since then $M_m$ is diagonal in the standard basis. However the transverse field $h\sum_{i=1}^m J_i\sigma^x_i$ behaves differently from a classical external field.

For $i\in [m]$, define the finite group
\[
    \cQ_i=\{\pm I_2,\pm \sigma^x_i,\pm\sigma^y_i,\pm\sigma^z_i\}
\]
of matrices acting on the $i$-th tensor factor. Let $\cQ\subseteq \End\big((\bbC^2)^{\otimes m}\big)$ be the group generated by all of the $\cQ_i$, so that $|\cQ|=2^{2m+1}$ (there are four choices for the operator in each tensor factor, as well as a global choice of sign). Applying Theorem~\ref{thm:main-M} to the quantum SK Hamiltonian with symmetry group $\cQ$ yields the following subadditivity result for the quantum SK model.

\begin{corollary}
\label{cor:QSK}
    For any $m\geq 1$ and constants $\beta_1,\beta_2,h_1,h_2\geq 0$, we have
    \[
        F_m^{QSK}\lt(\sqrt{\beta_1^2+\beta_2^2},\sqrt{h_1^2+h_2^2}\rt)
        \leq
        F_m^{QSK}(\beta_1,h_1)
        +
        F_m^{QSK}(\beta_2,h_2).
    \]
\end{corollary}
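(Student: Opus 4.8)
The plan is to deduce Corollary~\ref{cor:QSK} from Theorem~\ref{thm:main-M} by exhibiting the left-hand side as the free energy of an independent matrix sum. Write $M_m(\beta,h)$ for the random operator in \eqref{eq:QSK-def} with parameters $(\beta,h)$ built from a fresh family of i.i.d.\ standard Gaussians, and let $\cM_i$ be the law of $M_m(\beta_i,h_i)$ for $i\in\{1,2\}$. The first step is the elementary observation that $\cM_1+\cM_2$ equals the law of $M_m\big(\sqrt{\beta_1^2+\beta_2^2},\sqrt{h_1^2+h_2^2}\big)$: in the independent sum $M_m(\beta_1,h_1)+M_m(\beta_2,h_2)$, the coefficient in front of each operator $\sigma^z_i\sigma^z_j$ is a sum of two independent centered Gaussians with variances $\beta_1^2/m$ and $\beta_2^2/m$, hence centered Gaussian with variance $(\beta_1^2+\beta_2^2)/m$; likewise the coefficient of $\sigma^x_i$ becomes centered Gaussian with variance $h_1^2+h_2^2$; and all of these coefficients are still jointly independent. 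Consequently the three quantities in the corollary are exactly $F(\cM_1+\cM_2)$, $F(\cM_1)$ and $F(\cM_2)$, and it remains to verify the two hypotheses of Theorem~\ref{thm:main-M} with $\cG=\cQ$ acting on $\cM_1$.

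To check that $\cM_1$ is $\cQ$-invariant it suffices to consider conjugation by the generators of $\cQ$, i.e.\ by single-site Pauli operators (and the scalar $-I$, which acts trivially). Using the relations \eqref{eq:pauli-relations} together with the fact that operators on distinct tensor factors commute, conjugation of any Pauli string $\sigma^z_i\sigma^z_j$ or $\sigma^x_i$ appearing in $M_1$ by a single-site Pauli $g$ returns the \emph{same} string multiplied by $\pm1$, the sign recording whether $g$ commutes or anticommutes with that string --- for instance $\sigma^z_k$ fixes every $\sigma^z_i\sigma^z_j$ and negates $\sigma^x_k$. Thus $gM_1g^{-1}$ differs from $M_1$ only by sign flips applied to some subset of the coefficients $\{J_{i,j}\}\cup\{J_i\}$; since these are jointly independent with symmetric (Gaussian) laws, $gM_1g^{-1}\sim\cM_1$.

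For the symmetrizing hypothesis I would prove the stronger statement that $\cQ$ is $X$-symmetrizing, in the sense of \eqref{eq:invariant-M}, for \emph{every} $2^m\times 2^m$ matrix $X$ --- in particular for $X=e^{M_1}$. Expand $X=\sum_{\alpha}c_\alpha P_\alpha$ in the trace-orthogonal basis of Pauli strings $P_\alpha$ (tensor products of $I_2,\sigma^x,\sigma^y,\sigma^z$), noting $c_\alpha=2^{-m}\Tr(P_\alpha X)$ and $c_I=2^{-m}\Tr(X)$. Conjugation by an element of $\cQ$ sends each $P_\alpha$ to $\pm P_\alpha$; and for every non-identity string $P_\alpha$ there is a single-site Pauli in $\cQ$ anticommuting with it, so exactly half the elements of $\cQ$ produce $+P_\alpha$ and half produce $-P_\alpha$, whence $\bbE^{g\in\cQ}[gP_\alpha g^{-1}]=0$ for $P_\alpha\neq I$. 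Since the identity term is fixed, $\bbE^{g\in\cQ}[gXg^{-1}]=c_I\,I_{2^m}=2^{-m}\Tr(X)\,I_{2^m}$, which is precisely \eqref{eq:invariant-M}. Theorem~\ref{thm:main-M} now gives $F(\cM_1+\cM_2)\le F(\cM_1)+F(\cM_2)$, i.e.\ the claimed bound.

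All of the individual steps are routine; the one place that needs thought is the \emph{choice} of symmetry group. The transverse field term $h\sum_i J_i\sigma^x_i$ is what makes this nontrivial: $e^{M_1}$ is not diagonal, so the $\bbZ_2^m$ action used in the classical SK case of Corollary~\ref{cor:temperature} is no longer symmetrizing. The resolution, and the conceptual content of the proof, is that the full single-site Pauli group $\cQ$ symmetrizes \emph{any} operator while still preserving the law of $M_1$ thanks to the symmetry of the Gaussian disorder; once that is recognized, everything reduces to the bookkeeping indicated above.
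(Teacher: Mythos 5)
Your proposal is correct and follows essentially the same route as the paper: reduce to Theorem~\ref{thm:main-M} with $\cG=\cQ$, note the independent sum has parameters $\big(\sqrt{\beta_1^2+\beta_2^2},\sqrt{h_1^2+h_2^2}\big)$, obtain $\cQ$-invariance from sign flips of the symmetric independent Gaussian couplings, and show $\cQ$ symmetrizes \emph{every} operator by expanding in the Pauli-string basis and averaging each non-identity string to zero. Your phrasing of the last step via the commute/anticommute sign character over all of $\cQ$ is a minor cosmetic variant of the paper's site-by-site averaging over $\cQ_i$, so nothing substantive differs.
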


\begin{proof}
It is easy to see that independently summing quantum SK Hamiltonians with parameters $(\beta_1,h_1)$ and $(\beta_2,h_2)$ gives another with parameters $\lt(\sqrt{\beta_1^2+\beta_2^2},\sqrt{h_1^2+h_2^2}\rt)$, so it suffices to check that the quantum SK model verifies the conditions of Theorem~\ref{thm:main-M} with symmetry group $\cQ$.

First, note that for any $g\in\cQ$, if $M_m$ is as in \eqref{eq:QSK-def}, then the relations \eqref{eq:pauli-relations} imply
\[
    gM_m g^{-1}
    =
    \frac{\beta}{\sqrt m}
    \sum_{1\leq i,j\leq m}
    \eps_{i,j}(g)J_{i,j} \sigma^z_i \sigma^z_j
    +
    h\sum_{i=1}^m  \eps_{i} J_i (g)\sigma^x_i.
\]
for deterministic $\eps_{i,j}(g),\,\eps_i(g)\in \{-1,1\}$. Since $J_{i,j}$ and $J_i$ are negation-invariant in law and independent, we find that $gM_m g^{-1}$ has the same law as $M_m$, verifying the $\cQ$-invariance condition of Theorem~\ref{thm:main-M}.

Next we argue that all of $(\bbC^2)^{\otimes m}$ is symmetrized by $\cQ$ in the sense of \eqref{eq:invariant-M}. It is not difficult to see that a basis for $\End\lt((\bbC^2)^{\otimes m}\rt)$ consists of the operators
\begin{equation}
\label{eq:M-otimes}
    M=\sigma_1\otimes\sigma_2\otimes\dots\otimes \sigma_m
\end{equation}
where $\sigma_i \in \{I_2,\sigma^x,\sigma^y,\sigma^z\}$ for each $i\in [m]$. Moreover all of these are traceless except the identity $I_2^{\otimes m}=I_{2^m}$ which clearly satisfies \eqref{eq:invariant-M}. It remains to show that if $\sigma_i \in \{\sigma^x,\sigma^y,\sigma^z\}$ for some $i\in [n]$, then $M$ in \eqref{eq:M-otimes} satisfies
\begin{equation}
\label{eq:QSK-symmetrized}
     \bbE^{g\in\cQ}[gMg^{-1}]=0.
\end{equation}
This in turn follows by considering the effect of conjugation on the $i$-th tensor factor. Indeed since each of $\sigma^x_i,\sigma^y_i,\sigma^z_i$ commutes with half of $\cQ_i$ and anti-commutes with the other half, we find that
\begin{equation}
\label{eq:gQg}
    \sum_{g\in\cQ_i}g \sigma g^{-1}=0
\end{equation}
for $\sigma\in \{\sigma^x_i,\sigma^y_i,\sigma^z_i\}$ which implies \eqref{eq:QSK-symmetrized}.
\end{proof}

\subsection{Application to the SYK Model}

The Sachdev-Ye-Kitaev (SYK) model introduced in \cite{sachdev1993gapless,SYK-talk} is a quantum mechanical model for strongly interacting fermions with applications to holography and black holes \cite{rosenhaus2019introduction}. While different from the quantum SK model, it also takes a similar form to classical spin glass Hamiltonians such as \eqref{eq:Gaussian-H}, again with non-commuting variables. We will see that the SYK model also falls under the purview of Theorem~\ref{thm:main-M}.

To define the SYK model, we work in the involutive $\bbC$-algebra $\cC_n$ generated by $\chi_1,\dots,\chi_n$ with relations:
\begin{equation}
\label{eq:SYK-relations}
\begin{aligned}
    \chi_i&=\chi_i^*,\quad i\in [n]
    \\
    \chi_i^2 &= 1,\quad i\in [n]
    \\
    \chi_i\chi_j+\chi_j\chi_i& = 0,\quad 1\leq i<j\leq n.
\end{aligned}
\end{equation}
Its dimension is $\dim(\cC_n)=2^n$, and a basis is given by the set of products
\begin{equation}
\label{eq:chiS}
    \chi_S=\chi_{s_1}\dots\chi_{s_k}
\end{equation}
for subsets $S=\{s_1,\dots,s_k\}$ with $1\leq s_1<\dots<s_k\leq n$. 

Being an associative algebra, $\cC_n$ acts on itself by left-multiplication. This representation yields a canonical identification of each element $x\in\cC_n$ with a linear operator $M^x:\cC_n\to\cC_n$. 
Fixing a positive even integer $q\in 2\bbN$, the SYK Hamiltonian is given by such an operator $M^x$ for random $x$. Precisely, we set
\begin{equation}
\label{eq:SYK-original}
    x=
    i^{q/2}\sum_{1\leq i_1<\dots<i_q\leq n} J_{i_1,\dots,i_q} \chi_1 \dots\chi_q
\end{equation}
where the disorder variables $J_{i_1,i_2,\dots,i_q}$ are i.i.d. centered Gaussians. It is easy to verify using the relations \eqref{eq:SYK-relations} that $M^x$ is Hermitian.

Recalling \eqref{eq:F-M}, let 
\[
    F_{n,q}^{\SYK}(\beta)=\bbE \log \lt(\Tr(e^{\beta M^x})/2^n\rt)
\]
be the expected free energy of $M^x$ in \eqref{eq:SYK-original}. Our main result for the SYK model follows.
\begin{corollary}
\label{cor:SYK}
For all $n,q\geq 1$ with $q$ even and any $\beta_1,\beta_2\geq 0$,
\[
    F_{n,q}^{\SYK}\big(\sqrt{\beta_1^2+\beta_2^2}\big)
    \leq
    F_{n,q}^{\SYK}(\beta_1)
    +
    F_{n,q}^{\SYK}(\beta_2).
\]
\end{corollary}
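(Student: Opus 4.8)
The plan is to invoke Theorem~\ref{thm:main-M} with the symmetry group $\cQ$ generated by the Clifford-type elements of $\cC_n$, exactly as in Corollary~\ref{cor:QSK} but adapted to the fermionic relations \eqref{eq:SYK-relations}. First I would observe that independently summing two SYK Hamiltonians with inverse temperatures $\beta_1$ and $\beta_2$ yields (in law) an SYK Hamiltonian with parameter $\sqrt{\beta_1^2+\beta_2^2}$, since the disorder variables $J_{i_1,\dots,i_q}$ are i.i.d.\ centered Gaussians and Gaussians add in quadrature; hence it suffices to verify the two hypotheses of Theorem~\ref{thm:main-M} for $\cM_1 = \cM = $ law of $\beta M^x$, with $\cG = \cQ$.

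For the group, I would take $\cQ\subseteq \End(\cC_n)$ to be generated by the left-multiplication operators $M^{\chi_i}$ for $i\in[n]$ (together with the scalar $\pm 1, \pm i$). Using the anticommutation relations \eqref{eq:SYK-relations}, conjugating a basis operator $M^{\chi_S}$ by $M^{\chi_i}$ multiplies it by a sign $\eps_{i,S}\in\{-1,+1\}$ depending on whether $i\in S$ and on the parity of $|S|$ — concretely, $\chi_i \chi_S \chi_i^{-1} = (-1)^{|S| - \mathbf 1\{i\in S\}}\chi_S$ up to the sign coming from $\chi_i^2 = 1$. For the $\cQ$-invariance of $\cM$: each such conjugation sends $x$ in \eqref{eq:SYK-original} to a Hamiltonian of the same form but with $J_{i_1,\dots,i_q}$ replaced by $\pm J_{i_1,\dots,i_q}$ with deterministic signs, and since the $J$'s are independent and negation-invariant, the law is preserved. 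For the $e^{\beta M^x}$-symmetrizing condition \eqref{eq:invariant-M}: since $\cQ$-invariance gives $g e^{\beta M^x} g^{-1}$ the same law as $e^{\beta M^x}$, it is enough to show $\cQ$ symmetrizes \emph{every} operator, i.e.\ $\bbE^{g\in\cQ}[g M g^{-1}] = \Tr(M) I / 2^n$ for all $M$; expanding $M$ in the basis $\{M^{\chi_S}\}_{S\subseteq[n]}$, the identity $M^{\chi_\emptyset}$ trivially satisfies this, and for each nonempty $S$ one shows $\sum_{g\in\cQ} g M^{\chi_S} g^{-1} = 0$. The latter holds because for any nonempty $S$ there exists some generator $M^{\chi_i}$ that \emph{anticommutes} with $M^{\chi_S}$ (take $i\in S$ if $|S|$ is odd, or any $i\notin S$ if $|S|$ is even — such $i$ exists unless $S=[n]$ with $n$ even, in which case take $i\in S$ and note $\chi_i$ anticommutes with $\chi_{[n]}$ iff $n$ is even... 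I would check the parities carefully), so half of $\cQ$ contributes $+M^{\chi_S}$ and half $-M^{\chi_S}$, giving cancellation; this also forces $M^{\chi_S}$ traceless, consistent with $\Tr(M^{\chi_S})=0$ for $S\neq\emptyset$.

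The main obstacle I anticipate is purely bookkeeping: correctly tracking the sign $\eps_{i,S}$ under conjugation in the Clifford algebra and verifying that for \emph{every} nonempty $S$ (including the degenerate case $S=[n]$) there is a generator anticommuting with $\chi_S$. A clean way to avoid casework is to note that $\chi_i \chi_S = (-1)^{|S \setminus \{i\}|}\chi_S \chi_i$ if $i\notin S$ and $\chi_i\chi_S = (-1)^{|S|-1}\chi_S\chi_i$ if $i\in S$ (reading off from moving $\chi_i$ past the factors of $\chi_S$ and using $\chi_i^2=1$), so the question of whether a commuting generator exists reduces to a parity count that always has a solution when $S\neq\emptyset$. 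Once these sign computations are in hand, the conclusion is immediate from Theorem~\ref{thm:main-M}, exactly mirroring the structure of the proof of Corollary~\ref{cor:QSK}.
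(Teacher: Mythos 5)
Your strategy is the same as the paper's: take for $\cG$ the group of (left multiplications by) signed monomials in the $\chi_i$, check $\cG$-invariance of the law of $M^x$ via deterministic sign flips of the i.i.d.\ centered Gaussians, note that independent sums add the $\beta$'s in quadrature, and invoke Theorem~\ref{thm:main-M}. The gap is in the symmetrizing step. First, your parities are reversed: since $\chi_i^{-1}=\chi_i$ and conjugation by $\chi_i$ sends $\chi_j\mapsto-\chi_j$ for $j\neq i$ and fixes $\chi_i$, one has $\chi_i\chi_S\chi_i^{-1}=(-1)^{|S\setminus\{i\}|}\chi_S$, so a generator \emph{anticommutes} with $\chi_S$ exactly when $i\in S$ and $|S|$ is even, or $i\notin S$ and $|S|$ is odd --- not as you state. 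More seriously, your claim that every nonempty $S$ admits an anticommuting generator (hence that the group average of every $M^{\chi_S}$, $S\neq\emptyset$, vanishes) is false: for $n$ odd the volume element $\chi_1\cdots\chi_n$ is central in $\cC_n$, so $gM^{\chi_{[n]}}g^{-1}=M^{\chi_{[n]}}$ for every $g$ in your group and the average equals $M^{\chi_{[n]}}\neq 0$, even though this operator is traceless. (Your hedged casework locates the trouble at $S=[n]$ with $n$ even, which is actually the harmless case: there $|S|$ is even and any $i\in S$ anticommutes.) Separately, ``$\cQ$ symmetrizes every operator on $\cC_n$'' cannot hold: conjugation by left multiplications fixes all right-multiplication operators, and the $2^n$ operators $\{M^{\chi_S}\}$ span only the left-multiplication subalgebra of $\End(\cC_n)$, not all of it; this is harmless only once you note that $e^{\beta M^x}=M^{e^{\beta x}}$ is itself a left multiplication, a point you never make.

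The missing ingredient is precisely the hypothesis that $q$ is even, which your argument never uses. Because the SYK element $x$ in \eqref{eq:SYK-original} is a combination of even-degree monomials, $e^{\beta x}$ lies in the even subalgebra of $\cC_n$, so only monomials $\chi_S$ with $|S|$ even (plus the identity, whose coefficient is $\Tr(e^{\beta M^x})/2^n$) appear in the expansion of $e^{\beta M^x}$. For each such nonempty $S$ one may pick $i\in S$, obtaining $\chi_i M^{\chi_S}\chi_i^{-1}=-M^{\chi_S}$, and your coset-pairing argument then kills the group average, verifying \eqref{eq:invariant-M} almost surely. This is how the paper's Lemma~\ref{lem:M-symmetrizing} proceeds (after an $S_n$ relabeling it conjugates by $\chi_n$ with $n\in S$, using exactly that the degree is even). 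With the restriction to the even subalgebra and the corrected signs your proof closes; as written, the symmetrizing claim you rely on is false whenever $n$ is odd.
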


The proof of Corollary~\ref{cor:SYK} is of course based on Theorem~\ref{thm:main-M}. 
Let $\cG_{\SYK}^+=\{\chi_S\}_{S\subseteq [n]}$ consist of all $2^n$ monomials written in sorted order, which is easily seen to be a basis for $\cC_n$. $\cG_{\SYK}^+$ is not a group, but $\cG_{\SYK}=\cG_{\SYK}^+\cup (-\cG_{\SYK}^+)$ is a group as shown in Proposition~\ref{prop:quantum-group} below. We will apply Theorem~\ref{thm:main-M} with $\cG=\cG_{\SYK}$. Corollary~\ref{cor:SYK} thus reduces to the following two claims.

\begin{lemma}
\label{lem:M-invariant}
The law of $M^x$ is $\cG_{\SYK}$-invariant.
\end{lemma}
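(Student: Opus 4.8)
\textbf{Proof proposal for Lemma~\ref{lem:M-invariant}.}

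The plan is to show that for each generator $g\in\cG_{\SYK}$, conjugation $x\mapsto gxg^{-1}$ sends the random element in \eqref{eq:SYK-original} to another element of the same form but with the Gaussian disorder $(J_{i_1,\dots,i_q})$ replaced by $(\eps_{i_1,\dots,i_q}(g)J_{i_1,\dots,i_q})$ for some deterministic signs $\eps_{i_1,\dots,i_q}(g)\in\{-1,1\}$. Once this is established, the lemma follows immediately: the $J_{i_1,\dots,i_q}$ are i.i.d.\ centered Gaussians, hence jointly invariant under any deterministic sign flips, so $gxg^{-1}\ed x$ as random elements of $\cC_n$, and therefore $M^{gxg^{-1}}=M^x$ in law. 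Here I use that the map $x\mapsto M^x$ is linear and that $M^{gxg^{-1}}$ is conjugate to $M^x$ by the operator of left-multiplication by $g$ (which is invertible since $g$ is); in particular $\cG_{\SYK}$-invariance in the sense of Section~3 holds because left-multiplication by $g$ is a unitary on $\cC_n$ equipped with its natural inner product making $\{\chi_S\}$ orthonormal.

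It therefore suffices to verify the sign-flip claim, which is a purely algebraic computation with the Clifford relations \eqref{eq:SYK-relations}. First I would record the basic commutation rule: for a single generator $\chi_k$ and a sorted monomial $\chi_S$, one has $\chi_k\chi_S=(-1)^{a}\chi_S\chi_k$ where $a=|S\setminus\{k\}|$ if $k\in S$ (resp.\ $a=|S|$ if $k\notin S$) -- this is just counting the anticommutations needed to move $\chi_k$ past each factor, using $\chi_k^2=1$ to cancel when $k\in S$. Consequently $\chi_k\chi_S\chi_k^{-1}=\pm\chi_S$, with an explicit sign depending only on $k$ and $S$, and never any change of the underlying monomial. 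Since a general $g\in\cG_{\SYK}^+$ is itself a sorted monomial $\chi_T$, iterating (or a direct parity count of $|S\cap T|$-type quantities) gives $\chi_T\chi_S\chi_T^{-1}=(-1)^{\langle S,T\rangle}\chi_S$ for an explicit $\bbZ_2$-bilinear-type pairing; the global sign in passing from $\cG_{\SYK}^+$ to $\cG_{\SYK}$ is irrelevant as it cancels in the conjugation $gxg^{-1}$. Applying this with $S=\{i_1,\dots,i_q\}$ ranging over all $q$-subsets and noting that the scalar $i^{q/2}$ is untouched by conjugation, we obtain exactly
\[
    gxg^{-1}=i^{q/2}\sum_{1\leq i_1<\dots<i_q\leq n}\eps_{i_1,\dots,i_q}(g)\,J_{i_1,\dots,i_q}\,\chi_{i_1}\dots\chi_{i_q},
\]
with $\eps_{i_1,\dots,i_q}(g)=(-1)^{\langle\{i_1,\dots,i_q\},T\rangle}\in\{-1,1\}$ deterministic, which is what we needed.

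I do not expect any serious obstacle here; the only place requiring care is getting the parity bookkeeping right when $k\in S$ versus $k\notin S$, and making sure the $\chi_i^2=1$ convention (rather than $\chi_i^2=-1/2$ or another normalization) is used consistently so that conjugation genuinely returns $\pm\chi_S$ with no leftover factors. A secondary minor point is to confirm that $x\mapsto M^x$ being a faithful $*$-algebra representation means distributional equality of $x$ transfers to distributional equality of $M^x$ as Hermitian matrices; this is immediate once one fixes the identification $\cC_n\cong\bbC^{2^n}$ via the basis $\{\chi_S\}$.
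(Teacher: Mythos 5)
Your proposal is correct and follows essentially the same route as the paper's proof: conjugation by a monomial $g\in\cG_{\SYK}$ only flips each coupling $J_{i_1,\dots,i_q}$ by a deterministic sign (via the Clifford relations \eqref{eq:SYK-relations}), sign-symmetry of the i.i.d.\ centered Gaussians then gives $gxg^{-1}\ed x$, and the representation $x\mapsto M^x$ transfers this to the operators. The extra bookkeeping you supply (the parity count and unitarity of left-multiplication in the $\{\chi_S\}$ basis) is just a more explicit version of what the paper leaves implicit.
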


\begin{lemma}
\label{lem:M-symmetrizing}
$\cG_{\SYK}$ is $M^x$-symmetrizing for all $x\in\cC_n$.
\end{lemma}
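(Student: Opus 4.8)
\textbf{Proof proposal for Lemma~\ref{lem:M-symmetrizing}.}

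The plan is to reduce the symmetrizing condition \eqref{eq:invariant-M} to a basis computation, exactly as in the proof of Corollary~\ref{cor:QSK}. Since both sides of \eqref{eq:invariant-M} are linear in $M^x$, it suffices to verify the identity when $x$ ranges over the basis $\{\chi_T\}_{T\subseteq[n]}$ of $\cC_n$, i.e. when $M=M^{\chi_T}$ is left-multiplication by $\chi_T$. First I would dispose of the case $T=\emptyset$: then $M^{\chi_\emptyset}=I_{2^n}$, which manifestly satisfies \eqref{eq:invariant-M}. For $T\neq\emptyset$, I claim $\Tr(M^{\chi_T})=0$ and $\bbE^{g\in\cG_{\SYK}}[gM^{\chi_T}g^{-1}]=0$, so both sides vanish.

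The key computational step is to understand conjugation in $\cC_n$. For the generators $g=\chi_j\in\cG_{\SYK}^+$, the anticommutation relations \eqref{eq:SYK-relations} give $\chi_j\chi_i\chi_j^{-1}=\chi_j\chi_i\chi_j = -\chi_i$ when $i\neq j$ and $\chi_j\chi_j\chi_j^{-1}=\chi_j$; hence for any subset $T$,
\[
    \chi_j\,\chi_T\,\chi_j^{-1}
    =
    (-1)^{|T\setminus\{j\}|}\,\chi_T
    =
    \varepsilon_j(T)\,\chi_T,
    \qquad
    \varepsilon_j(T)\in\{-1,+1\},
\]
and since every element of $\cG_{\SYK}$ is $\pm$ a product of the $\chi_j$, conjugation by \emph{any} $g\in\cG_{\SYK}$ sends $\chi_T$ to $\pm\chi_T$ (the global sign cancels). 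Translating to operators, $gM^{\chi_T}g^{-1}=M^{g\chi_Tg^{-1}}=\varepsilon(g,T)\,M^{\chi_T}$ for a sign $\varepsilon(g,T)$. So $\bbE^{g\in\cG_{\SYK}}[gM^{\chi_T}g^{-1}] = \big(\bbE^{g\in\cG_{\SYK}}\varepsilon(g,T)\big)M^{\chi_T}$, and it remains to see that this average sign is $0$ whenever $T\neq\emptyset$. For that, pick any $i\in T$; then $\chi_i$ commutes with $\chi_T$ but, choosing any $j\notin T$ (if $n>|T|$) or more simply observing that conjugation by $\chi_k$ for $k\notin T$ gives $\varepsilon_k(T)=(-1)^{|T|}$ while conjugation by $\chi_i$ for $i\in T$ gives $\varepsilon_i(T)=(-1)^{|T|-1}$, we see the two signs differ. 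Concretely, fix $i\in T$; the map $g\mapsto \chi_i g$ is a bijection of $\cG_{\SYK}$ (since $\chi_i\in\cG_{\SYK}$) and one checks $\varepsilon(\chi_i g,T) = \varepsilon_i(T)\varepsilon(g,T) = -\varepsilon(g,T)$ using that $\chi_i$ commutes with $\chi_T$ only up to the sign $\varepsilon_i(T)=(-1)^{|T|-1}$ — wait, more carefully: $\varepsilon_i(T)=+1$ exactly when $|T|$ is odd. To get a clean pairing I would instead choose an index where the sign is forced to be $-1$: if $|T|$ is even pick $i\in T$ (then $\varepsilon_i(T)=(-1)^{|T|-1}=-1$); if $|T|$ is odd pick $k\notin T$, which exists since $T\neq[n]$ is impossible only when $T=[n]$ with $n$ odd — handle that residual case by picking $i\in T$ and using $\varepsilon_i([n])=(-1)^{n-1}=+1$ fails, so instead use that for $T=[n]$, $\chi_{[n]}$ is central up to sign and one computes $\Tr(M^{\chi_{[n]}})$ directly. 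The honest statement: for each nonempty $T$ there exists $h\in\cG_{\SYK}$ with $\varepsilon(h,T)=-1$, and then $g\mapsto hg$ pairs up $\cG_{\SYK}$ into pairs on which $\varepsilon(\cdot,T)$ takes opposite values, forcing the average to vanish. Finding such an $h$ is elementary: conjugation by $\chi_j$ multiplies $\chi_T$ by $(-1)$ iff $j\in T$ and $|T|$ is even, or $j\notin T$ and $|T|$ is odd; the only way no such $j$ exists is $T=[n]$ with $n$ even (then every $\chi_j$ has $j\in T$, $|T|$ even, giving $\varepsilon_j=-1$ — good, $h=\chi_1$ works) or $T=\emptyset$. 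So in fact $h=\chi_1$ or $h=\chi_j$ for suitable $j$ always works when $T\neq\emptyset$.

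The main obstacle is purely bookkeeping: getting the parity of $\varepsilon_j(T)$ right and confirming that a sign-reversing generator $h$ always exists for nonempty $T$. I would also, for cleanliness, separately record that $\Tr(M^{\chi_T})=0$ for $T\neq\emptyset$ — this follows because $M^{\chi_T}$ permutes the basis $\{\chi_S\}$ up to signs and has no fixed basis vector (as $\chi_T\chi_S=\pm\chi_{T\triangle S}\neq \pm\chi_S$ when $T\neq\emptyset$), so all diagonal entries vanish; combined with the averaging argument this confirms both sides of \eqref{eq:invariant-M} are zero. Assembling: by linearity in $x$, \eqref{eq:invariant-M} holds for all $x\in\cC_n$, which is exactly the claim that $\cG_{\SYK}$ is $M^x$-symmetrizing for all $x$.
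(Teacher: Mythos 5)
Your overall route is the same as the paper's: reduce by linearity to the monomials $M^{\chi_T}$, observe $\Tr(M^{\chi_T})=0$ for $T\neq\emptyset$, and kill the conjugation average by a sign-reversal pairing (the paper does exactly this, pairing $\cG_{\SYK}^+$ with $\cG_{\SYK}^+\chi_n$ after using the $S_n$-symmetry to place $\chi_n$ in the support). Your sign formula $\chi_j\chi_T\chi_j^{-1}=(-1)^{|T\setminus\{j\}|}\chi_T$ and the multiplicativity $\varepsilon(hg,T)=\varepsilon(h,T)\varepsilon(g,T)$ are both correct.

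However, the exceptional-case bookkeeping at the end is wrong, and it lands exactly on the one spot where the argument cannot be repaired. By your own criterion, a sign-reversing generator fails to exist precisely when $|T|$ is even and $T=\emptyset$, or when $|T|$ is odd and $T=[n]$; your claim that the bad case is ``$T=[n]$ with $n$ even'' has the parity backwards (that case is fine: any $j\in T$ gives $\varepsilon_j=(-1)^{n-1}=-1$). The genuinely bad case is $T=[n]$ with $n$ odd. There $\chi_{[n]}$ is central in $\cC_n$, so by multiplicativity $\varepsilon(g,[n])=+1$ for \emph{every} $g\in\cG_{\SYK}$, no pairing element $h$ exists, and in fact the symmetrizing identity \eqref{eq:invariant-M} fails for this monomial: $\bbE^{g\in\cG_{\SYK}}[gM^{\chi_{[n]}}g^{-1}]=M^{\chi_{[n]}}\neq 0$, while $\Tr(M^{\chi_{[n]}})=0$. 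Your fallback of ``computing $\Tr(M^{\chi_{[n]}})$ directly'' does not help: the trace is indeed zero, but the conjugation average is not. The repair --- which is what the paper's proof implicitly does by writing the monomial with $q$ even --- is to prove the symmetrizing property only for $x$ in the even part of $\cC_n$ (spans of even-degree monomials), which is all Corollary~\ref{cor:SYK} requires: the SYK Hamiltonian \eqref{eq:SYK-original} has even degree $q$, so $e^{\beta M^x}=M^{\exp(\beta x)}$ with $\exp(\beta x)$ again a combination of even-degree monomials, and for even $|T|\neq 0$ any $j\in T$ supplies the sign $-1$. With that restriction your pairing argument goes through verbatim; without it, the claim you set out to prove (all nonempty $T$) is simply not true when $n$ is odd.
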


We begin with an easy preliminary result. For any permutation $\pi\in S_n$ and $S=\{s_1,\dots,s_k\}\subseteq [n]$ with $1\leq s_1<s_2<\dots<s_k\leq n$, define
\[
    \chi^{\pi}_S
    =
    \chi_{\pi(s_1)}\chi_{\pi(s_2)}\dots\chi_{\pi(s_k)}.
\]

\begin{proposition}
\label{prop:quantum-group}
    For any $\pi\in S_n$, the set of $2^{n+1}$ monomials $\{\pm\chi^{\pi}_S\}_{S\subseteq [n]}$ coincides with the multiplicative subgroup $\cG_{\SYK}\subseteq \cC_n$ and is in particular independent of $\pi$.
\end{proposition}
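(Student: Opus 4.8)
The plan is to show that $\cG_{\SYK} = \{\pm\chi_S\}_{S\subseteq[n]}$ is closed under multiplication (the other group axioms being immediate, since $\chi_\emptyset = 1$ and each generator is its own inverse), and then to show that conjugation by a permutation $\pi$ permutes this set, so that $\{\pm\chi^\pi_S\}$ again equals $\cG_{\SYK}$.

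First I would establish closure. Given two sorted monomials $\chi_S$ and $\chi_T$, their product $\chi_S\chi_T$ is a word in the $\chi_i$ in which indices in $S\cap T$ appear twice and indices in $S\triangle T$ appear once. Using the anticommutation relation $\chi_i\chi_j = -\chi_j\chi_i$ for $i\neq j$ to sort the word, and $\chi_i^2 = 1$ to delete the repeated pairs, one obtains $\chi_S\chi_T = \pm\chi_{S\triangle T}$, with the sign being $(-1)^{N(S,T)}$ where $N(S,T)$ counts the inversions that must be corrected (equivalently, the number of pairs $(s,t)\in S\times T$ with $s > t$, modulo the cancellations — the exact formula is not needed, only that the result lies in $\{\pm\chi_{S\triangle T}\}$). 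Hence the set is closed under products, and since it is manifestly closed under negation and contains $1 = \chi_\emptyset$, it is a subgroup of the multiplicative monoid of $\cC_n$. Its cardinality is $2^{n+1}$ because the $2^n$ monomials $\chi_S$ are linearly independent (they form the stated basis), so in particular no two of them are negatives of one another.

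Next I would handle the permutation. For $\pi\in S_n$, the element $\chi^\pi_S = \chi_{\pi(s_1)}\cdots\chi_{\pi(s_k)}$ is again a product of distinct generators, so after sorting via the anticommutation relations it equals $\pm\chi_{\pi(S)}$ where $\pi(S) = \{\pi(s_1),\dots,\pi(s_k)\}$. Therefore $\chi^\pi_S \in \cG_{\SYK}$ for every $S$, giving $\{\pm\chi^\pi_S\}_{S\subseteq[n]} \subseteq \cG_{\SYK}$. Conversely, as $S$ ranges over all subsets of $[n]$ so does $\pi(S)$, and the map $S\mapsto \pm\chi^\pi_S$ is injective on the $2^{n+1}$-element set $\{\pm 1\}\times 2^{[n]}$ (again by linear independence of the $\chi_{\pi(S)}$), so $\{\pm\chi^\pi_S\}$ has exactly $2^{n+1}$ elements and must equal all of $\cG_{\SYK}$. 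Since this holds for every $\pi$, the set is independent of $\pi$.

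I expect the only mildly delicate point to be bookkeeping the sign in the closure step, i.e. verifying that reordering a concatenated word of generators into sorted form and cancelling squares genuinely lands in $\{\pm\chi_{S\triangle T}\}$ rather than producing some other monomial; but this is a routine consequence of the relations \eqref{eq:SYK-relations} and does not affect the group-theoretic conclusion, since membership in $\cG_{\SYK}$ only requires the $\pm\chi_U$ form for some $U$. Everything else is a counting argument resting on the fact that $\{\chi_S\}_{S\subseteq[n]}$ is a basis, hence linearly independent, hence no cancellations or coincidences among signed monomials occur.
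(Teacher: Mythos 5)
Your proof is correct and follows essentially the same route as the paper: both arguments use the relations \eqref{eq:SYK-relations} to sort products into the form $\pm\chi_U$, giving closure, and then observe that permuting the generators merely permutes the signed monomials. The only cosmetic difference is that you sort $\chi^{\pi}_S$ directly into $\pm\chi_{\pi(S)}$ and conclude by a bijection/counting argument, whereas the paper reduces the independence of $\pi$ to adjacent transpositions; both rest on the same anticommutation observation.
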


\begin{proof}
    First we show that $\cG_{\SYK}$ is a group. It is easy to see by using the relations \eqref{eq:SYK-relations} to reorder terms that any product of (possibly non-distinct) generators $\chi_{i_1}\dots\chi_{i_k}$ takes the form $\pm\chi_S$ for some $S\subseteq [n]$, so that $\cG_{\SYK}$ is closed under multiplication. Similar reasoning reveals that $\chi_S^{-1}\in \{\chi_S,-\chi_S\}$ for all $S\in [n]$, so that $\cG_{\SYK}$ is closed under inverse and is hence a group.
    
    Next, let $\cG_{\SYK}^{\pi}\subseteq \cC_n$ denote the set $\{\pm\chi^{\pi}_S\}_{S\subseteq [n]}$. We claim that $\cG_{\SYK}^{\pi}=\cG_{\SYK}^{\pi'}$ if $\pi'$ is obtained from $\pi$ by an adjacent transposition, which implies that $\cG_{\SYK}^{\pi}$ does not depend on the permutation $\pi$. This in turn follows from the equality of sets $\{\chi_i\chi_j,-\chi_i\chi_j\}=\{\chi_j\chi_i,-\chi_j\chi_i\}$ for distinct $i,j\in [n]$, completing the proof.
\end{proof}

\begin{proof}[Proof of Lemma~\ref{lem:M-invariant}]
Fix any monomial $y=\pm\chi_{j_1}\dots\chi_{j_m}$ and let
\[
    x=
    i^{q/2}
    \sum_{1\leq i_1<\dots<i_q\leq n} J_{i_1,\dots,i_q} \chi_1 \dots\chi_q
\]
be as in \eqref{eq:SYK-original}. Note that $y^{-1}\in \{y,-y\}$. It follows from the relations~\eqref{eq:SYK-relations} that 
\[
    yxy^{-1}
    =
    i^{q/2}\sum_{1\leq i_1<\dots<i_q\leq n} \eps_{i_1,\dots,i_q}(y) J_{i_1,\dots,i_q} \chi_1 \dots\chi_q
\]
for deterministic constants $\eps_{i_1,\dots,i_q}(y) \in \{-1,1\}$. Since the couplings $J_{i_1,\dots,i_q}$ are i.i.d. centered Gaussian, we find that $x$ and $yxy^{-1}$ have the same law. Since $x\mapsto M_x$ defines a representation of $\cC_n$, $M^x$ and $M^y M^x (M^y)^{-1}$ also have the same law. This completes the proof since $y\in\cG_{\SYK}$ was arbitrary.
\end{proof}

\begin{proof}[Proof of Lemma~\ref{lem:M-symmetrizing}]
We show that for any linear map $M:\cC_n\to\cC_n$, 
\begin{equation}
\label{eq:SYK-symmetrizing}
    \bbE^{g\in\cG_{\SYK}}[gMg^{-1}]=
    \bbE^{g\in\cG_{\SYK}^+}[gMg^{-1}]
    =
    \frac{\Tr(M)\cdot I_{2^n}}{2^n}.
\end{equation}
The first equality is clear because $gMg^{-1}=(-g)M(-g)^{-1}$ for all $g\in\cG_{\SYK}$, so we focus on the latter equality. Recall that $\cG_{\SYK}^+$ is a basis for $\cC_n$. In this basis, it is easy to see that each $M^g$ for $g\in \cG_{\SYK}^+$ acts by a signed permutation matrix which is the identity when $g=1$, and otherwise has all diagonal entries $0$ hence trace $0$. Since \eqref{eq:SYK-symmetrizing} is clear when $M$ is the identity, it suffices to establish for all non-identity elements $x\in \cG_{\SYK}^+$ the equality
\begin{equation}
\label{eq:SYK-symmetrizing-reduced}
    \sum_{g\in\cG_{\SYK}^+}gM^xg^{-1}
    =
    0.
\end{equation}
From the $S_n$ symmetry guaranteed by Proposition~\ref{prop:quantum-group}, we may without loss of generality assume that $x$ contains the term $\chi_n$, so that $x=\chi_{i_1} \dots\chi_{i_{q-1}}\chi_n$ for $1\leq i_1<\dots<i_{q-1}<n$. Next, let $\cG_{\SYK,-n}^+\subseteq \cG_{\SYK}^+$ consist of the monomials with no $\chi_n$ term. Since $\cG_{\SYK}^+=\cG_{\SYK,-n}^+\cup \lt(\cG_{\SYK,-n}^+\cdot \chi_n\rt)$, we find
\begin{align*}
    \sum_{g\in\cG_{\SYK}^+}
    gM^xg^{-1}
    &=
    \sum_{g\in\cG_{\SYK,-n}^+}
    gM^xg^{-1}+ g\chi_n M^x \chi_n^{-1} g^{-1} 
    \\
    &=
    \sum_{g\in\cG_{\SYK,-n}^+}
    g\big(M+\chi_n M^x \chi_n\big) g^{-1} 
    \\
    &=
    0.
\end{align*}
In the last step we used the identity $\chi_n M^x \chi_n=-M^x$ for $x=\chi_{i_1} \dots\chi_{i_{q-1}}\chi_n$. This holds because $q$ is even and $\chi_n\chi_{i_k}=-\chi_{i_k}\chi_n$ for all $1\leq k\leq q-1$. 
\end{proof}

\begin{remark}
\label{rem:quantum-general}
    In the spirit of \eqref{eq:Gaussian-H}, Corollary~\ref{cor:SYK} extends to more general formulations of the SYK model. Let $[Q]_2=[Q]\cap 2\bbN$ be the set of positive even integers at most $Q$ and consider
    \begin{equation}
    \label{eq:SYK-general}
        x=
        \sum_{q\in [Q]_2}
        i^{q/2}
        \sum_{1\leq i_1<\dots<i_q\leq n} J_{i_1,\dots,i_q} \chi_1 \dots\chi_q
    \end{equation}
    for independent centered Gaussians $J_{i_1,\dots,i_q}$ with arbitrary variances $\vec c=(c_{i_1,\dots,i_q})_{i_j\in [n],q\in [Q]_2}$. The analog of Corollary~\ref{eq:beta} generalizes to this setting, as indeed Corollary~\ref{cor:SYK}'s proof extends essentially verbatim.

    Further, Corollaries~\ref{cor:QSK} and  \ref{cor:SYK} admit a common generalization. This is not surprising if one observes that the algebra of linear endomorphisms $\End(\bbC^2,\bbC^2)$ is isomorphic to $\cC_2$. Indeed it follows from the relations \eqref{eq:pauli-relations} that an isomorphism is given by extending the map
    \begin{align*}
        I_2 &\mapsto 1,
        \\
        \sigma^x &\mapsto \chi_1,
        \\
        \sigma^y &\mapsto i\cdot \chi_1\chi_2,
        \\
        \sigma^z &\mapsto \chi_2
        .
    \end{align*}
    Therefore the quantum SK Hamiltonians considered previously live in $\cC_2^{\otimes m}$ while SYK Hamiltonians live in $\cC_n$. In general, for any positive integers $m$ and $(n_1,\dots,n_m)$, we may consider the algebra $\bigotimes_{j=1}^m \cC_{n_j}$ with $(\chi_i^j)_{i\in [n_j]}$ the generators of $\cC_{n_j}$.
    Then Theorem~\ref{thm:main-M} implies analogous free energy subadditivity for the random operators on $\bigotimes_{j=1}^m \cC_{n_j}$ given by
    \[
        M=
        \sum_{(q_1,\dots,q_m)\in [Q_1]_2\times\dots\times [Q_m]_2}
        i^{\sum_{j=1}^m q_j/2}
        \sum_{1\leq i^j_1<i^j_2<\dots<i^j_{q_j}\leq n_j}
        J_{\{(i^j_k):\,j\in [m],k\in [q_j]\}}
        \prod_{j=1}^{m}
        \big(\chi_{i^j_1}^j\dots\chi_{i^j_{q_j}}^j\big)
    \]
    where $J_{(\cdot)}\sim \cN(0,c_{(\cdot)})$ are independent centered Gaussians with arbitrary variances.
    The proof is identical to that of Corollary~\ref{cor:QSK}, with Lemma~\ref{lem:M-symmetrizing} used in the last step on each $\cC_{n_j}$ in place of \eqref{eq:gQg}.
\end{remark}

\section*{Acknowledgement}

We thank Sourav Chatterjee, Brice Huang and the anonymous referee for helpful comments. We were introduced to the SYK model by an excellent lecture of Ryan O'Donnell at the Simons Institute. This work was supported in part by NSF grant CCF2006489.

\appendix

\section{Suboptimal Alternate Approach for Mixed $p$-spin Models}
\label{sec:parisi}

Recall from Subsection~\ref{subsec:mixed-p-spin} that for a deterministic sequence $(c_2,\dots,c_P)\in\bbR_{\geq 0}^{P}$, the corresponding mixed $p$-spin Hamiltonian without external field\footnote{Corollary~\ref{cor:parisi} extends easily to handle an external field, but we omit this for convenience.} is given by
\[
    H_N(\bsig)
    =
    \sum_{p=2}^P
    \frac{1}{N^{(p-1)/2}}
    \sum_{1\leq i_1,\dots,i_p\leq N}
    J_{i_1,\dots,i_p}
    \sigma_{i_1}\dots\sigma_{i_p}
\]
for independent Gaussians $J_{i_1,\dots,i_p}\sim \cN(0,c_p)$ with variance $c_p$. The celebrated Parisi formula gives the limiting free energy in this model for the reference measure $\mu$ which is uniform on $\bsig\in\{-1,1\}^N$.

Surprisingly, we were unable to recover the free energy subadditivity in
\begin{equation}
    \label{eq:xi-appendix}
    \xi(x)=\sum_{p=2}^P c_p x^p
\end{equation}
directly from the Parisi formula. In this appendix we use it to show the weaker Corollary~\ref{cor:parisi} in which free energy is defined relative to \textbf{counting} measure on $\{-1,1\}^N$ instead of uniform measure. Counting measure is actually more commonly used to define free energy in Ising spin glasses since the corresponding partition function is just the sum $\sum_{\bsig\in \{-1,1\}^N} H_N(\bsig)$. (Indeed this discrepancy led to some confusion on our part, which motivated us to include this appendix.)
In Remark~\ref{rem:subadditive-fails} we discuss why our proof strategy seemingly cannot recover the stronger estimate of Theorem~\ref{thm:main}. It would be interesting to derive free energy subadditivity relative to uniform measure by using the Parisi formula in a different way.

Let $\cM_{[0,1]}$ denote the space of increasing and right-continuous functions $\zeta:[0,1]\to [0,1]$. To state the Parisi formula at inverse temperature $\beta=1$ (without loss of generality since $\beta$ can be absorbed into $\xi$), we define for $\zeta\in\cM_{[0,1]}$ the function $\Phi_{\xi,\zeta}:[0,1]\times\bbR\to \bbR_{\geq 0}$ as the solution to the non-linear PDE:
\begin{align}
    \label{eq:ParisiPDEdefn}
    \partial_t \Phi_{\xi,\zeta}(t,x)+\frac{1}{2}\xi''(t)\left(\partial_{xx}\Phi_{\xi,\zeta}(t,x)+\zeta(t)(\partial_x \Phi_{\xi,\zeta}(t,x))^2\right)&=0
    \\
\nonumber
    \Phi_{\xi,\zeta}(1,x)&=\log\cosh(x).
\end{align}
Existence and uniqueness of solutions are shown in \cite{auffinger2015parisi,jagannath2016dynamic} (in fact we will use the formula from \cite[Theorem 1]{auffinger2015parisi} below, with the term $\log 2$ omitted for the main statement with uniform measure). The Parisi functional for the Ising mixed $p$-spin model is defined by 
\begin{equation}
    \label{eq:def-parisi-functional-is}
    \Par_{\xi}(\zeta) 
    = 
    \Phi_{\xi,\zeta}(0,0) - \frac{1}{2}\int_{0}^1 t\xi''(t)\zeta(t) \diff{t}.
\end{equation}
Finally the Parisi formula \cite{parisi1979infinite,guerra2003broken,talagrand2006parisi,panchenko2013parisi,panchenko2014parisi,panchenko2013sherrington} states that
\[
    F(\xi)\equiv \lim_{N\to\infty} F_N(\xi)/N = \inf_{\zeta\in\cM_{[0,1]}} \Par_{\xi}(\zeta).
\]

We will use the more convenient Auffinger-Chen representation for the Parisi functional which we now describe. Given a filtration $\mathcal F=(\mathcal F_t)_{t\in [0,1]}$ and adapted standard Brownian $B(t)$, let $\mathcal D[0,1]$ denote the space of progressively measurable processes $(u_t)_{t\in [0,1]}$ such that $|u_t|\leq 1$ holds almost surely for each $t\in [0,1]$.
Define
\begin{align*}
    \cX_{\xi,\zeta}(u;B)&=\cY_{\xi,\zeta}(u;B)-\cZ_{\xi,\zeta}(u);
    \\
    \cY_{\xi,\zeta}(u;B)
    &\equiv 
    \Phi_{\xi,\zeta}\left(1,\int_{0}^{1} \zeta(t)\xi''(t)u(t)\de t
    +
    \int_{0}^{1} \sqrt{\xi''(t)}\de B(t)
    \right),
    \\
    \cZ_{\xi,\zeta}(u)&\equiv\frac{1}{2}\int_{0}^{1} \zeta(t)\xi''(t)u(t)^{2} \de t\,.
\end{align*}
Auffinger-Chen showed the Parisi functional can be represented as a stochastic control problem involving $\cX_{\xi,\zeta}$.

\begin{proposition}{\cite[Theorem 3]{auffinger2015parisi}}
\label{prop:optimal-control}
The function $\Phi_{\xi,\zeta}$ defined in \eqref{eq:ParisiPDEdefn} satisfies
\[
    \Phi_{\xi,\zeta}(0,0)= \max_{u\in\cD[0,1]} \E[\cX_{\xi,\zeta}(u;B)]\,.
\]  
\end{proposition}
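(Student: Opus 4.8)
The plan is to read \eqref{eq:ParisiPDEdefn} as the Hamilton--Jacobi--Bellman equation of the stochastic control problem whose value is the right-hand side, and then prove the identity by a verification argument. Fix a filtered probability space carrying the Brownian motion $B$. To each admissible control $u\in\cD[0,1]$ associate the controlled diffusion
\[
    \de X^u_t = \zeta(t)\xi''(t)\,u_t\,\de t + \sqrt{\xi''(t)}\,\de B_t,\qquad X^u_0=0,
\]
so that $\cY_{\xi,\zeta}(u;B)=\Phi_{\xi,\zeta}(1,X^u_1)$ and $\cZ_{\xi,\zeta}(u)=\tfrac12\int_0^1\zeta(t)\xi''(t)u_t^2\,\de t$; thus $\E[\cX_{\xi,\zeta}(u;B)]$ is exactly the expected ``terminal reward minus running cost'' payoff of a standard control problem, and the claim is that its optimal value over $u\in\cD[0,1]$ equals $\Phi_{\xi,\zeta}(0,0)$.

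For the bound $\Phi_{\xi,\zeta}(0,0)\ge\E[\cX_{\xi,\zeta}(u;B)]$, valid for every $u$, I would apply It\^o's formula to $t\mapsto\Phi_{\xi,\zeta}(t,X^u_t)$. Substituting the PDE \eqref{eq:ParisiPDEdefn} for $\partial_t\Phi_{\xi,\zeta}+\tfrac12\xi''\partial_{xx}\Phi_{\xi,\zeta}$ and completing the square in $u_t$ gives
\[
\begin{aligned}
    \Phi_{\xi,\zeta}(1,X^u_1)
    &=\Phi_{\xi,\zeta}(0,0)+\cZ_{\xi,\zeta}(u)
    -\tfrac12\int_0^1\zeta(t)\xi''(t)\big(\partial_x\Phi_{\xi,\zeta}(t,X^u_t)-u_t\big)^2\,\de t
    \\
    &\qquad+\int_0^1\sqrt{\xi''(t)}\,\partial_x\Phi_{\xi,\zeta}(t,X^u_t)\,\de B_t.
\end{aligned}
\]
Taking expectations annihilates the stochastic integral (its integrand is bounded, since $\xi''$ is bounded on $[0,1]$ and $|\partial_x\Phi_{\xi,\zeta}|\le1$ as discussed below), and since $\zeta\ge0$ and $\xi''\ge0$ the remaining quadratic term has nonnegative expectation; hence $\E[\cX_{\xi,\zeta}(u;B)]=\E[\Phi_{\xi,\zeta}(1,X^u_1)]-\E[\cZ_{\xi,\zeta}(u)]\le\Phi_{\xi,\zeta}(0,0)$.

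For the matching lower bound, take the feedback control $u^*_t=\partial_x\Phi_{\xi,\zeta}(t,X^*_t)$, where $X^*$ solves the displayed SDE with $u_t$ replaced by $\partial_x\Phi_{\xi,\zeta}(t,X^*_t)$; the quadratic error term then vanishes identically and $\E[\cX_{\xi,\zeta}(u^*;B)]=\Phi_{\xi,\zeta}(0,0)$. Two points need checking: that $u^*$ is admissible, i.e.\ $|\partial_x\Phi_{\xi,\zeta}|\le1$ everywhere, and that the feedback SDE is well posed. The bound $|\partial_x\Phi_{\xi,\zeta}|\le1$ follows from the terminal data $|\partial_x\log\cosh(x)|=|\tanh x|\le1$ together with the maximum principle applied to the linear parabolic equation satisfied by $w=\partial_x\Phi_{\xi,\zeta}$, obtained by differentiating \eqref{eq:ParisiPDEdefn} in $x$.

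The main obstacle is regularity: $\zeta$ is only increasing and right-continuous, so $\Phi_{\xi,\zeta}$ need not be $C^1$ in $t$ across the jumps of $\zeta$, and $\sqrt{\xi''(\cdot)}$ need not be Lipschitz near zeros of $\xi''$, so neither the It\^o step nor well-posedness of the feedback SDE is immediate in full generality. The natural remedy is a two-step argument: first establish the identity when $\zeta$ is a step function with finitely many jumps --- there $\Phi_{\xi,\zeta}$ is smooth on each interval of constancy (the PDE linearizes under a Hopf--Cole substitution), the feedback SDE is classically solvable between consecutive jumps, and the It\^o computation above is rigorous --- and then pass to general $\zeta$ by approximation, using continuity of $\zeta\mapsto\Phi_{\xi,\zeta}(0,0)$ and of $\zeta\mapsto\sup_{u\in\cD[0,1]}\E[\cX_{\xi,\zeta}(u;B)]$ under, say, $L^1([0,1])$ convergence of $\zeta$. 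Proving this stability --- in particular the uniform estimates needed to interchange the limit in $\zeta$ with the supremum over controls --- is where the real work lies; an alternative route that avoids some of the PDE regularity is to expand the iterated log-exp structure of $\Phi_{\xi,\zeta}(0,0)$ for step $\zeta$ by repeated use of the Bou\'e--Dupuis variational formula and then collapse the nested suprema into the single control problem.
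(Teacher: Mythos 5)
The paper does not prove this proposition at all: it is imported verbatim from Auffinger--Chen \cite[Theorem 3]{auffinger2015parisi}, with only the remark that the filtration need not be the one generated by $B$. So the comparison is really with the cited source, and your sketch is essentially that proof: the It\^o expansion of $t\mapsto \Phi_{\xi,\zeta}(t,X^u_t)$, substitution of the PDE \eqref{eq:ParisiPDEdefn}, completion of the square yielding the nonpositive term $-\tfrac12\int_0^1\zeta(t)\xi''(t)\big(\partial_x\Phi_{\xi,\zeta}(t,X^u_t)-u_t\big)^2\de t$, and the feedback control $u^*_t=\partial_x\Phi_{\xi,\zeta}(t,X^*_t)$ achieving equality, with admissibility from $|\partial_x\Phi_{\xi,\zeta}|\le 1$. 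This skeleton is correct, and it also explains the paper's remark: the upper-bound half only requires $B$ to be a Brownian motion with respect to whatever filtration $u$ is progressively measurable for, which is exactly the extension the paper needs when $u$ is adapted to the filtration generated by $(B_1,B_2)$ in Proposition~\ref{prop:parisi}.

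The gap is the regularity step, which you flag but do not carry out, and which is precisely where the content of the cited proof lies. Two specific points. First, your worry about $\sqrt{\xi''}$ failing to be Lipschitz is immaterial: the diffusion coefficient is deterministic and depends on $t$ only, so both the It\^o step and strong well-posedness of the feedback SDE require nothing beyond boundedness and measurability of $\sqrt{\xi''}$ together with Lipschitz continuity of $x\mapsto\partial_x\Phi_{\xi,\zeta}(t,x)$, which comes from a uniform bound on $\partial_{xx}\Phi_{\xi,\zeta}$. Second, the genuine issue is differentiability of $\Phi_{\xi,\zeta}$ in $t$ across the jumps of $\zeta$ and the passage from step-function $\zeta$ to general $\zeta\in\cM_{[0,1]}$. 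The standard resolution (in \cite{auffinger2015parisi,jagannath2016dynamic}, both cited in the appendix) rests on uniform-in-$\zeta$ estimates $|\partial_x\Phi_{\xi,\zeta}|\le1$ and $|\partial_{xx}\Phi_{\xi,\zeta}|\le C$, the Guerra-type Lipschitz bound $|\Phi_{\xi,\zeta}(0,0)-\Phi_{\xi,\zeta'}(0,0)|\le\tfrac12\int_0^1\xi''(t)|\zeta(t)-\zeta'(t)|\de t$, and a matching stability estimate for $\sup_{u\in\cD[0,1]}\E[\cX_{\xi,\zeta}(u;B)]$ in $\zeta$ (which can be extracted from the same completion-of-square identity with a near-optimal control held fixed); none of this is established in your write-up, and the proposed uniform interchange of the $\zeta$-limit with the supremum over controls is asserted rather than proved. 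As it stands your argument is a correct outline with its hard analytic step deferred; the paper's own (legitimate) shortcut is simply to cite the result.
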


We note that the statement in \cite{auffinger2015parisi} assumes $\mathcal F$ is the filtration generated by $B(t)$, but this is not necessary.

\begin{proposition}
\label{prop:parisi}
For mixed $p$-spin covariances $\xi_1,\xi_2$ as in \eqref{eq:xi-appendix} and $\zeta_1,\zeta_2\in \cM_{[0,1]}$, let
\begin{equation}
\label{eq:zeta-defn}
    \zeta(t)\equiv
    \frac{\zeta_1(t)\xi_1''(t)+\zeta_2(t)\xi_2''(t)}
    {\xi_1''(t)+\xi_2''(t)}\in \cM_{[0,1]}\,.
\end{equation}
Then we have the inequality
\[
    \Phi_{\xi_1+\xi_2,\zeta}(0,0)
    \leq
    \Phi_{\xi_1,\zeta_1}(0,0)
    +
    \Phi_{\xi_2,\zeta_2}(0,0)+\log(2)
    \,.
\]
\end{proposition}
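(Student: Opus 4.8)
The plan is to use the Auffinger--Chen stochastic control representation (Proposition~\ref{prop:optimal-control}) to reduce the claimed inequality to a comparison of optimal control values, and then to exhibit a single admissible control for the $(\xi_1+\xi_2,\zeta)$-problem whose payoff is bounded by the sum of the two individual optimal payoffs. Writing $\xi=\xi_1+\xi_2$, the key algebraic observation underlying the definition \eqref{eq:zeta-defn} is that $\zeta(t)\xi''(t)=\zeta_1(t)\xi_1''(t)+\zeta_2(t)\xi_2''(t)$, so the ``drift weight" $\zeta\xi''$ splits additively. Also $\xi''=\xi_1''+\xi_2''$, so $\sqrt{\xi''(t)}\,\de B(t)$ can be realized as $\sqrt{\xi_1''(t)}\,\de B_1(t)+\sqrt{\xi_2''(t)}\,\de B_2(t)$ for independent Brownian motions $B_1,B_2$ (this is where the factor $\log 2$ will eventually come from, via subadditivity of $\log\cosh$ under addition of arguments together with the $\log\cosh(x+y)\le \log\cosh(x)+\log\cosh(y)+\log 2$ type bound).

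First I would fix near-optimal controls $u^{(1)}\in\cD[0,1]$ and $u^{(2)}\in\cD[0,1]$ for the two problems $\Phi_{\xi_1,\zeta_1}(0,0)$ and $\Phi_{\xi_2,\zeta_2}(0,0)$, defined on a common probability space carrying independent Brownian motions $B_1,B_2$ with $u^{(j)}$ adapted to the filtration generated by $B_j$. Set $B=$ the Brownian motion with $\de B(t)=\tfrac{\sqrt{\xi_1''(t)}}{\sqrt{\xi''(t)}}\de B_1(t)+\tfrac{\sqrt{\xi_2''(t)}}{\sqrt{\xi''(t)}}\de B_2(t)$, which is a standard Brownian motion by Lévy's characterization. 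Then I would define a candidate control for the $\xi$-problem by the $\zeta\xi''$-weighted average
\[
    u(t)=\frac{\zeta_1(t)\xi_1''(t)\,u^{(1)}(t)+\zeta_2(t)\xi_2''(t)\,u^{(2)}(t)}{\zeta(t)\xi''(t)},
\]
which lies in $\cD[0,1]$ since it is a convex combination of $\pm1$-bounded adapted processes (with the usual convention when the denominator vanishes). The point of this choice is that the drift argument feeding into $\Phi_{\xi,\zeta}(1,\cdot)$ becomes $\int_0^1 \zeta\xi'' u\,\de t+\int_0^1\sqrt{\xi''}\,\de B = \big(\int_0^1\zeta_1\xi_1''u^{(1)}\de t+\int_0^1\sqrt{\xi_1''}\de B_1\big)+\big(\int_0^1\zeta_2\xi_2''u^{(2)}\de t+\int_0^1\sqrt{\xi_2''}\de B_2\big)$, i.e. it splits as the sum of the two individual drift arguments, call them $G_1+G_2$.

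Now $\cY_{\xi,\zeta}(u;B)=\Phi_{\xi,\zeta}(1,G_1+G_2)=\log\cosh(G_1+G_2)\le \log\cosh(G_1)+\log\cosh(G_2)+\log 2$, using the elementary bound $\cosh(a+b)\le 2\cosh(a)\cosh(b)$. Meanwhile the cost terms satisfy $\cZ_{\xi,\zeta}(u)=\tfrac12\int_0^1\zeta\xi'' u^2\,\de t\le \tfrac12\int_0^1\zeta_1\xi_1''(u^{(1)})^2\,\de t+\tfrac12\int_0^1\zeta_2\xi_2''(u^{(2)})^2\,\de t=\cZ_{\xi_1,\zeta_1}(u^{(1)})+\cZ_{\xi_2,\zeta_2}(u^{(2)})$ by convexity of $x\mapsto x^2$ applied to the same convex combination (here it is crucial that $\cZ$ has a $+$ sign reversed relative to $\cY$, so convexity pushes the right way). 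Combining, $\cX_{\xi,\zeta}(u;B)=\cY_{\xi,\zeta}(u;B)-\cZ_{\xi,\zeta}(u)\le \cX_{\xi_1,\zeta_1}(u^{(1)};B_1)+\cX_{\xi_2,\zeta_2}(u^{(2)};B_2)+\log 2$. Taking expectations, using that $u^{(j)}$ were near-optimal, and invoking Proposition~\ref{prop:optimal-control} for all three problems gives $\Phi_{\xi_1+\xi_2,\zeta}(0,0)\le \Phi_{\xi_1,\zeta_1}(0,0)+\Phi_{\xi_2,\zeta_2}(0,0)+\log 2+\varepsilon$ for arbitrary $\varepsilon>0$, whence the result.

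The main obstacle I anticipate is purely bookkeeping rather than conceptual: verifying that the constructed $u$ is genuinely admissible (progressive measurability with respect to a filtration to which $B$ is adapted, and the $|u|\le 1$ bound, including degenerate points where $\zeta\xi''=0$), and carefully handling the fact that Proposition~\ref{prop:optimal-control} as quoted uses the Brownian filtration while our $u^{(1)},u^{(2)}$ live on a product space --- this is exactly the remark after Proposition~\ref{prop:optimal-control} that the filtration need not be the one generated by $B$. One should also double-check that $\zeta$ as defined in \eqref{eq:zeta-defn} is indeed increasing and right-continuous so that it is a legitimate element of $\cM_{[0,1]}$; increasingness is not entirely obvious since it is a ratio, but it follows because $\zeta(t)$ is a weighted average of the increasing functions $\zeta_1,\zeta_2$ with weights that themselves vary, and a short argument (or an appeal to the monotonicity of $\zeta_1,\zeta_2$ together with nonnegativity of $\xi_j''$) settles it.
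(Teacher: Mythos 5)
Your argument has a directional error that breaks it. The Auffinger--Chen representation is a \emph{maximum} over controls, so to prove the upper bound on $\Phi_{\xi_1+\xi_2,\zeta}(0,0)$ you must bound the payoff of the \emph{optimizing} (or an arbitrary) control of the $(\xi_1+\xi_2,\zeta)$-problem. Instead you construct one particular control $u$ (a weighted average of near-optimizers $u^{(1)},u^{(2)}$ of the subproblems) and bound \emph{its} payoff; this only shows $\E[\cX_{\xi_1+\xi_2,\zeta}(u;B)]\leq \Phi_{\xi_1,\zeta_1}(0,0)+\Phi_{\xi_2,\zeta_2}(0,0)+\log 2$, while Proposition~\ref{prop:optimal-control} applied to the sum problem gives $\Phi_{\xi_1+\xi_2,\zeta}(0,0)\geq \E[\cX_{\xi_1+\xi_2,\zeta}(u;B)]$ --- the wrong direction to conclude anything. (The near-optimality of $u^{(1)},u^{(2)}$ is never actually used in the right direction, which is a symptom of the same confusion.) The paper runs the argument the other way: take $u$ to be the maximizer of the \emph{sum} problem, adapted to the joint filtration of $B_1,B_2$; plug this \emph{same} $u$ into both subproblems (admissible precisely because of the remark that the filtration in Proposition~\ref{prop:optimal-control} need not be the Brownian one), bound each $\E[\cX_{\xi_i,\zeta_i}(u;B_i)]$ by $\Phi_{\xi_i,\zeta_i}(0,0)$, and prove the pathwise inequality $\cX_{\xi_1+\xi_2,\zeta}(u;B)\leq \cX_{\xi_1,\zeta_1}(u;B_1)+\cX_{\xi_2,\zeta_2}(u;B_2)+\log 2$.

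There is also a sign error in your pathwise estimate. Since $\cX=\cY-\cZ$, combining $\cY_{\xi,\zeta}(u;B)\leq \cY_1+\cY_2+\log 2$ with $\cZ_{\xi,\zeta}(u)\leq \cZ_1(u^{(1)})+\cZ_2(u^{(2)})$ gives nothing: you would need $\cZ_{\xi,\zeta}(u)\geq \cZ_1(u^{(1)})+\cZ_2(u^{(2)})$, and Jensen applied to your weighted average yields the opposite inequality. In the paper's version this issue does not arise because the same $u$ is used in all three functionals, so the cost terms split \emph{exactly}: $\cZ_{\xi_1+\xi_2,\zeta}(u)=\cZ_{\xi_1,\zeta_1}(u)+\cZ_{\xi_2,\zeta_2}(u)$, with $\log 2$ coming only from the subadditivity of $\log\big(2\cosh(x)\big)$ applied to the split drift argument. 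Your identification of the drift splitting, the role of $\zeta\xi''=\zeta_1\xi_1''+\zeta_2\xi_2''$, and the source of the $\log 2$ are all correct, but the control must be shared rather than averaged, and the optimization must start from the sum problem.
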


\begin{proof}
Let $B_1(t)$ and $B_2(t)$ be independent standard Brownian motions generating together the filtration $\mathcal F=(\mathcal F_t)_{t\geq 0}$. Define $\zeta$ as in \eqref{eq:zeta-defn} and set
\begin{align*}
    B(t)&=\sqrt{\frac{\xi_1(t)}{\xi_1(t)+\xi_2(t)}}B_1(t)+\sqrt{\frac{\xi_2(t)}{\xi_1(t)+\xi_2(t)}}B_2(t).
\end{align*}
Note that $B(t)$ is also a standard Brownian motion. We choose $u\in\cD[0,1]$ to maximize $\cX_{\xi_1+\xi_2}(u,0)$ with driving Brownian motion $B_t$, so that $\Phi_{\xi_1+\xi_2,\zeta}(0,0)=\cX_{\xi_1+\xi_2,\zeta}(u;B)$. Proposition~\ref{prop:optimal-control} implies that for $i\in \{1,2\}$, we have
\begin{equation}
\label{eq:ubi}
    \E[\cX_{\xi_i,\zeta_i}(u;B_i)]
    \leq 
    \Phi_{\xi_i,\zeta_i}(0,0)\,.
\end{equation}
Therefore it suffices to prove that almost surely,
\begin{equation}
\label{eq:Parisi-PDE-inequality}
    \cX_{\xi_1+\xi_2,\zeta}(u;B)
    \leq
    \cX_{\xi_1,\zeta_1}(u;B_1)
    +
    \cX_{\xi_2,\zeta_2}(u;B_2)+\log(2).
\end{equation}
We first have
\[
    \cZ_{\xi_1+\xi_2,\zeta}(u)
    =
    \cZ_{\xi_1,\zeta_1}(u)
    +
    \cZ_{\xi_2,\zeta_2}(u)
\]
by definition. Next, the inequality
\[
    \cY_{\xi_1+\xi_2,\zeta}(u;B)
    \leq
    \cY_{\xi_1,\zeta_1}(u;B_1)
    +
    \cY_{\xi_2,\zeta_2}(u;B_2)+\log(2)
\]
follows from the identity
\begin{align*} 
    \int_{0}^{1} \zeta(t)\big(\xi_1''(t)+\xi_2''(t)\big)u(t)\de t
    +
    \int_{0}^{1} \sqrt{\xi_1''(t)+\xi_2''(t)}\de B(t)
    &=
    \int_{0}^{1} \zeta_1(t)\xi_1''(t)u(t)\de t
    +
    \int_{0}^{1} \sqrt{\xi_1''(t)}\de B_1(t)
    \\
    &\quad
    +
    \int_{0}^{1} \zeta_2(t)\xi_2''(t)u(t)\de t
    +
    \int_{0}^{1} \sqrt{\xi_2''(t)}\de B_2(t)
\end{align*}
and the easily verified subadditivity of the function $\log\big(2\cosh(x)\big)$. This concludes the proof.
\end{proof}

\begin{corollary}
\label{cor:parisi}
    The free energy in the mixed $p$-spin model satisfies $F(\xi_1+\xi_2)\leq F(\xi_1)+F(\xi_2)+\log(2)$.
\end{corollary}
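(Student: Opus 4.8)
The plan is to combine the Parisi formula with Proposition~\ref{prop:parisi}. Recall that the Parisi formula (with the $\log 2$ term, appropriate for counting measure) states
\[
    F(\xi) = \inf_{\zeta\in\cM_{[0,1]}} \Par_{\xi}(\zeta),
    \qquad
    \Par_{\xi}(\zeta) = \Phi_{\xi,\zeta}(0,0) + \log 2 - \frac{1}{2}\int_0^1 t\xi''(t)\zeta(t)\,\de t.
\]
(The factor $\log 2$ is what distinguishes counting measure from uniform measure.) So it suffices to produce, for any $\zeta_1,\zeta_2\in\cM_{[0,1]}$, a single $\zeta\in\cM_{[0,1]}$ with $\Par_{\xi_1+\xi_2}(\zeta)\leq \Par_{\xi_1}(\zeta_1)+\Par_{\xi_2}(\zeta_2)+\log 2$; taking the infimum over $\zeta_1,\zeta_2$ on the right then yields the claim.

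First I would fix $\zeta_1,\zeta_2$ achieving (or nearly achieving) the infimum in the Parisi formulas for $\xi_1$ and $\xi_2$ respectively, and define $\zeta$ by the weighted average \eqref{eq:zeta-defn}, namely $\zeta(t) = \big(\zeta_1(t)\xi_1''(t)+\zeta_2(t)\xi_2''(t)\big)/\big(\xi_1''(t)+\xi_2''(t)\big)$. One checks this lies in $\cM_{[0,1]}$ (a convex-combination-type argument; monotonicity requires a short verification but is routine since $\xi_i''$ are nonnegative and the $\zeta_i$ are increasing). Next I would handle the integral term: since $(\xi_1+\xi_2)''(t)\zeta(t) = \xi_1''(t)\zeta_1(t)+\xi_2''(t)\zeta_2(t)$ by construction, we get exactly
\[
    \frac{1}{2}\int_0^1 t(\xi_1+\xi_2)''(t)\zeta(t)\,\de t
    =
    \frac{1}{2}\int_0^1 t\xi_1''(t)\zeta_1(t)\,\de t
    +
    \frac{1}{2}\int_0^1 t\xi_2''(t)\zeta_2(t)\,\de t,
\]
so the integral terms split perfectly with no error.

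Then I would invoke Proposition~\ref{prop:parisi}, which gives $\Phi_{\xi_1+\xi_2,\zeta}(0,0) \leq \Phi_{\xi_1,\zeta_1}(0,0)+\Phi_{\xi_2,\zeta_2}(0,0)+\log 2$. Adding the (split) integral terms and the bookkeeping $\log 2$ factors from the Parisi formula, we obtain
\[
    \Par_{\xi_1+\xi_2}(\zeta) \leq \Par_{\xi_1}(\zeta_1)+\Par_{\xi_2}(\zeta_2)+\log 2.
\]
Since $F(\xi_1+\xi_2)\leq \Par_{\xi_1+\xi_2}(\zeta)$ while $\Par_{\xi_i}(\zeta_i)$ can be taken arbitrarily close to $F(\xi_i)$, letting the approximations tighten gives $F(\xi_1+\xi_2)\leq F(\xi_1)+F(\xi_2)+\log 2$, as desired.

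The main obstacle is really already encapsulated in Proposition~\ref{prop:parisi} — the delicate part is controlling the Parisi PDE solution $\Phi_{\xi,\zeta}(0,0)$ under addition of $\xi$'s, which is handled there via the Auffinger-Chen stochastic control representation (Proposition~\ref{prop:optimal-control}) together with the subadditivity of $x\mapsto\log(2\cosh x)$ and a coupling of Brownian motions $B = \sqrt{\xi_1/(\xi_1+\xi_2)}\,B_1 + \sqrt{\xi_2/(\xi_1+\xi_2)}\,B_2$. Given that proposition, the corollary itself is essentially bookkeeping: verifying $\zeta\in\cM_{[0,1]}$ and tracking the $\log 2$ terms. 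A minor subtlety worth flagging is that the extra additive $\log 2$ arises precisely because $\log(2\cosh(x))$ is subadditive but not the stronger $\log\cosh$, and this is why the argument does not recover the clean (no $\log 2$) bound of Theorem~\ref{thm:main} — this should be noted (as in Remark~\ref{rem:subadditive-fails}).
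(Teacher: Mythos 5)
Your route is the same as the paper's: take (near-)minimizers $\zeta_1,\zeta_2$, form $\zeta$ by \eqref{eq:zeta-defn}, note that the linear term of the Parisi functional splits exactly, and invoke Proposition~\ref{prop:parisi}. The one substantive problem is your $\log 2$ accounting. In the paper, $\Par_\xi$ is defined in \eqref{eq:def-parisi-functional-is} \emph{without} a $\log 2$, so $F(\xi)=\inf_\zeta \Par_\xi(\zeta)$ is the uniform-measure free energy, and the $+\log 2$ in Corollary~\ref{cor:parisi} is exactly the defect supplied by Proposition~\ref{prop:parisi} (i.e.\ by the subadditivity of $\log(2\cosh x)$ rather than of $\log\cosh x$). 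You instead place the $\log 2$ inside $\Par$ (counting-measure normalization) \emph{and} add the Proposition's $\log 2$ on top. Redo the tally in your convention: the left side $\Par_{\xi_1+\xi_2}(\zeta)$ carries one $+\log 2$ while the right side $\Par_{\xi_1}(\zeta_1)+\Par_{\xi_2}(\zeta_2)$ carries two, so Proposition~\ref{prop:parisi} together with the exact splitting of the integral gives $\Par_{\xi_1+\xi_2}(\zeta)\le \Par_{\xi_1}(\zeta_1)+\Par_{\xi_2}(\zeta_2)$ with \emph{no} extra $\log 2$. Appending another $+\log 2$, as you do, proves (translated to the paper's normalization) only $F(\xi_1+\xi_2)\le F(\xi_1)+F(\xi_2)+2\log 2$, strictly weaker than the corollary. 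The fix is already contained in your intermediate steps: either drop the spurious $+\log 2$ (clean subadditivity of the counting-measure free energy, which is equivalent to the stated corollary), or work with the paper's $\Par$ and let the single $\log 2$ come solely from Proposition~\ref{prop:parisi}.

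A second caution: you call the monotonicity of $\zeta$ ``routine because $\xi_i''\ge 0$ and the $\zeta_i$ are increasing,'' but that reasoning is not valid. The weights $\xi_i''(t)/\big(\xi_1''(t)+\xi_2''(t)\big)$ depend on $t$, and a $t$-dependent convex combination of increasing functions need not be increasing: with $\zeta_1\equiv 1$, $\zeta_2\equiv 0$, $\xi_1$ pure $2$-spin and $\xi_2$ pure $4$-spin, \eqref{eq:zeta-defn} gives $\zeta(t)=\xi_1''(t)/\big(\xi_1''(t)+\xi_2''(t)\big)$, which is strictly decreasing. The paper itself asserts $\zeta\in\cM_{[0,1]}$ at \eqref{eq:zeta-defn} without proof, so this is a subtlety you share with the paper rather than one you introduced; but it does matter, because the step $F(\xi_1+\xi_2)\le \Par_{\xi_1+\xi_2}(\zeta)$ uses the Parisi formula, whose infimum runs over $\cM_{[0,1]}$ only, so either monotonicity of $\zeta$ for the chosen $\zeta_1,\zeta_2$ or an upper bound valid for non-monotone order parameters needs to be supplied.
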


\begin{proof}
    Choose $\zeta_1,\zeta_2\in\cM_{[0,1]}$ to minimize the respective Parisi functionals, i.e.
    \[
        F(\xi_i)=\Par_{\xi_i}(\zeta_i),\quad i\in \{1,2\}.
    \]
    Then with $\zeta$ as in \eqref{eq:zeta-defn}. we have
    \begin{align*}
        F(\xi_1+\xi_2)&\leq\Par_{\xi_1+\xi_2}(\zeta)
        \\
        &=
        \Phi_{\xi_1+\xi_2,\zeta}(0,0)-\frac{1}{2}\int_0^1 t\big(\xi_1''(t)+\xi_2''(t)\big)\zeta(t)\de t
        \\
        &\leq
        \Phi_{\xi_1,\zeta_1}(0,0)+\Phi_{\xi_2,\zeta_2}(0,0)
        +\log(2)
        -
        \frac{1}{2}\int_0^1 t\big(\xi_1''(t)\zeta_1(t)+\xi_2''(t)\zeta_2(t)\big)\de t
        \\
        &=
        \Par_{\xi_1}(\zeta_1)
        +
        \Par_{\xi_2}(\zeta_2) +\log(2)
        \\
        &=
        F(\xi_1)+F(\xi_2) +\log(2).
    \end{align*}
\end{proof}

\begin{remark}
\label{rem:subadditive-fails}

There is a good reason that our argument above cannot recover the stronger estimate of Theorem~\ref{thm:main}. Since the process $u$ used in proving Proposition~\ref{prop:parisi} is adapted to the filtration generated by $B(t)$, it cannot agree with any nontrivial process adapted to the filtrations generated by $B_1(t)$ or $B_2(t)$. This means that \eqref{eq:ubi} and hence Proposition~\ref{prop:parisi} essentially never hold with equality. By contrast the bound in Corollary~\ref{cor:temperature} does hold with equality at high temperature.

Let us also mention that our proof of Proposition~\ref{prop:parisi} has a similar spirit to that of \cite[Theorem 4]{auffinger2015parisi}, which used Proposition~\ref{prop:optimal-control} to establish the strict convexity of the Parisi functional in $\zeta$. In fact the obstruction just outlined resembles their proof that the convexity is strict.
\end{remark}

\bibliographystyle{plain}
\bibliography{all-bib}

\end{document}